\documentclass[10pt]{article}
\usepackage[margin=1.15in]{geometry}
\usepackage{amsfonts, amsmath, amssymb,latexsym,amsthm,color,setspace}


\newtheorem{defn0}{Definition}[section]
\newtheorem{prop0}[defn0]{Proposition}
\newtheorem{thm0}[defn0]{Theorem}
\newtheorem{lemma0}[defn0]{Lemma}
\newtheorem{corollary0}[defn0]{Corollary}
\newtheorem{example0}[defn0]{Example}
\newtheorem{remark0}[defn0]{Remark}
\newtheorem{conjecture0}[defn0]{Conjecture}
\newtheorem{discusion0}[defn0]{Discusion}

\newenvironment{definition}{ \begin{defn0}}{\end{defn0}}
\newenvironment{proposition}{\bigskip \begin{prop0}}{\end{prop0}}
\newenvironment{theorem}{\bigskip \begin{thm0}}{\end{thm0}}
\newenvironment{lemma}{\bigskip \begin{lemma0}}{\end{lemma0}}
\newenvironment{corollary}{\bigskip \begin{corollary0}}{\end{corollary0}}
\newenvironment{example}{ \begin{example0}\rm}{\end{example0}}
\newenvironment{remark}{ \begin{remark0}\rm}{\end{remark0}}

\newcommand{\propref}[1]{Proposition~\ref{#1}}
\newcommand{\thmref}[1]{Theorem~\ref{#1}}
\newcommand{\lemref}[1]{Lemma~\ref{#1}}
\newcommand{\corref}[1]{Corollary~\ref{#1}}
\newcommand{\exref}[1]{Example~\ref{#1}}


\def\series#1{{{\mathbb C}[\![x_1,\dots x_{#1}]\!]}}
\def\max{{\bf m}}                   
\def\res{{\bf k}}                   
\def\length{{\mathrm{ Length}}}
\def\Hom{{\mathrm{ Hom}}}
\def\HF{{\operatorname{H\!F}}}
\def\HP{\operatorname{H\!P}}
\def\pn{\operatorname{pn}}
\def\soc{\operatorname{soc}}

\def\spec{{\bf Spec}}
\def\hilb{{\bf Hilb}}

\def\ext{{\mathrm{ Ext}}}

\def\Can{{\mathrm{ Can }}}
\def\Pf{{\mathrm{ Pf }}}

\def\OX{{\mathcal O}_{X}}

\def\OX0{{\mathcal O}_{(X,0)}}
\def\OCn0{{\mathcal O}_{(\mathbb C^n,0)}}
\def\Cn0{({\mathbb C}^n,0)}
\def\OC30{{\mathcal O}_{(\mathbb C^3,0)}}
\def\C30{({\mathbb C}^3,0)}
\def\val{ {\rm val} }

\def\cocoa
{\mbox{\rm C\kern-.13em o\kern-.07 em C\kern-.13em o\kern-.15em A}}


\title{  \bf \huge On the canonical ideals of one-dimensional Cohen-Macaulay local rings
\footnote{ 2010 {\it Mathematics Subject Classification}. Primary
14.18; Secondary 13H15, 13H10;
\newline
\indent \ \ {\it Key words and Phrases:} Canonical ideal, Gorenstein, Cohen-Macaulay, Hilbert-Samuel Function.}
}
\author{\large   Juan Elias
\thanks{Partially supported by  MTM2010-20279-C02-01 and  PRX12/00173}
 }

\date{\today}

\begin{document}

\maketitle

\begin{abstract}
In this paper we consider the problem of finding explicitly canonical ideals of one-dimensional Cohen-Macaulay local rings.
We show that Gorenstein ideals contained in a high power of the maximal ideal are canonical ideals.
In the codimension two case, from a Hilbert-Burch resolution, we show how to construct canonical ideals of curve singularities.
Finally, we   translate the problem of the analytic classification of
curve singularities  to the classification of local Artin Gorenstein rings  with suitable length.
\end{abstract}

\section{Introduction}

Let $(R,\max)$ be a one-dimensional  Cohen-Macaulay local ring with maximal ideal $\max$
for which there exists a canonical module $\omega_R$, this is the case, for instance,  if $R$ is the quotient of a local Gorenstein ring.
Recall that $R$ possesses a   canonical ideal (more precisely, the canonical module $\omega_R$ of $R$ exists and is contained in $R$) if and only if the total ring of fractions of the $\max$-adic completion of $R$ is Gorenstein, \cite[Satz 6.21]{HK71}.
See \cite[Chapter 3]{BH97} for the basic properties of canonical modules and canonical ideals.

The question that motivates this paper is: can we describe explicitly canonical ideals?
Recall that
Boij in \cite{Boi99b} addressed this problem  for projective zero-dimensional schemes.
A second question that we consider is: can we use canonical ideals for the analytic classification of singularities?
In this paper we study these questions in the one dimensional case.

The contents of this paper is the following.
In the second  section we show that Gorenstein ideals contained in a high power of the maximal ideal are canonical ideals, \propref{existence}.
In the codimension two case, and following \cite{Boi99b},  from a Hilbert-Burch resolution we show how to construct canonical ideals for curve singularities, \propref{anti}.
In this section we recall how to describe "explicitly"   the canonical module  by using Rosenlicht's  regular differential forms, \cite[IV 9]{Ser59}.
This strategy is very useful in the case  of branches, and  specially in the case case of monomial curve singularities.
In the third section we  address the problem of the analytic classification of curve singularities.
We show  canonical ideals $I\subset \OX0$, where $X$ is a curve singularity,
for which we compute the multiplicity and the socle degree of the Artin Gorenstein quotient $\OX0 /I$,
\propref{canlocus}.
In \thmref{chargor} we  translate the problem of the analytic classification of
 curve singularities $X$ to the classification of local Artin Gorenstein rings $\OX0 /I$ of suitable length.


\medskip
\noindent {\sc Notations:}
Let $(R,\max)$ be a one-dimensional  Noetherian local ring with maximal ideal $\max$ and residue filed $\res=R/\max$.
If $I$ is a $\max-$primary ideal of $R$ we denote by
$\HF_{I}^1(n)=length_R(R/I^{n+1})$ the Hilbert-Samuel function of $I$.
Hence there exist integers $e_0=(I)\ge 1$ and $e_1(I)$ such that
$\HP_{I}^1(n):=e_0(I) (n+1)-  e_1(I)$, this is the  Hilbert-Samuel polynomial of $I$, i.e. $\HF_{I}^1(n)=\HP_{I}^1(n)$
for $n \gg 0$.
The integer $e_0(I)$  is the multiplicity of $I$.
We denote by $\HF_{I}^0(n)=length_R(I^n/I^{n+1})$ the $0$-th Hilbert-Samuel function of $I$.
Then $\HP_{I}^0(n)=e_0(I)$ for $n \gg 0$, the postulation number of $I$  is the least integer $\pn(I)$ such that
 $\HF_{I}^0(n)=\HP_{I}^0(n)$ for all $n\ge \pn(I)$.
We set $\HF_R^i=\HF_{\max}^i$,  $\HP_R^i=\HP_{\max}^i$, $i=0,1$, and $\pn(R)=\pn(\max)$.
If $M$ is an Artin $R$-module we define the socle degree $s(M)$ of $M$ as the last integer $t$ such that
$\max^t M \neq0$.
The socle of $M$ is by definition the $R$-submodule  $\soc(M)=(0:_M\max)$ of $M$.

Next we recall some basic facts of curve singularities.
Let $(X,0)$ be a reduced curve singularity of $(\mathbb C^n, 0)= \spec(\OCn0)$, i.e.
$\OCn0=\series{n}$, and
$(X,0)=\spec(\OX0)$ where
$\OX0= \OCn0/I_X$ is a one-dimensional reduced ring with maximal ideal $\max_X$.
We write $\HF^i_X=\HF^i_{\OX0}$ and  $\HP^i_X=\HP^i_{\OX0}$.
We assume that $n$ is the embedding dimension of $(X,0)$, this is equivalent to say
that $\max_X  / \max_X^2$ is isomorphic as $\mathbb C$-vector space to the homogeneous
linear forms of
$P=\mathbb C[X_1,\cdots,X_n]$.
Hence all element $x\in \max_X  / \max_X^2$ defines an element  in the quotient $\OX0$ that
we will denote again by $x$.
Let $\nu: \overline{X}=\spec (\overline{\OX0})\longrightarrow (X,0)$ be the normalization of $(X,0)$, where
$\overline{\OX0}$ is the integral closure of $\OX0$ on its full ring ${\mathrm{ tot}}(\OX0)$ of fractions.
The singularity order of $(X,0)$ is
$
\delta(X)=\dim_{\mathbb C}\left({\mathcal O}_{\overline{X}}/\OX0\right).
$
We denote by $\mathcal C$ the conductor of the finite extension
$\nu^*: \OX0 \hookrightarrow
\overline{\OX0}$
and by $c(X)$ the dimension of $\overline{\OX0}/\mathcal C$.
Let  $\omega_{(X,0)}=\ext ^{n-1}_{\OCn0}(\OX0, \Omega^n_{\OCn0})$ be the dualizing module of $(X,0)$.
We can consider the composition morphism of $\OX0$-modules
$$
\gamma_X: \Omega_{(X,0)} \longrightarrow
\nu_* \Omega_{\overline{X}}\cong
\nu_* \omega_{\overline{X}} \longrightarrow
 \omega_{(X,0)}.
$$
Let $d: \OX0 \longrightarrow  \Omega_{(X,0)}$ the universal derivation, then we have a $\mathbb C$-map $\gamma_X  d$ that we also denote
by $d: \OX0 \longrightarrow  \omega_{(X,0)}.$
The Milnor number of $(X,0)$ is  $\mu(X)=\dim_{\mathbb C}(\omega_{(X,0)}/d \OX0)$, \cite{BG80}.
Notice that $(X,0)$ is non-singular iff $\mu(X)=0$ iff $\delta(X)=0$ iff $c(X)=0$.

In the following result we collect some basic results on $\mu$ and other numerical invariants that
we will use later on.

\begin{proposition}
\label{basic}
Let $(X,0)$ be a reduced curve singularity of embedding dimension $n$.
Then

\noindent
$(i)$ $\mu(X)= 2 \delta(X) -r+1$, where $r$ is the number of branches of $(X,0)$.

\noindent
$(ii)$
It holds
$$
 e_0(X)-1\le e_1(X) \le \delta(X)\le \mu(X)
$$
and $e_1(X)\le {e_0(X)\choose 2} -{ n-1\choose 2}$.

\noindent
$(iii)$ If $X$ is singular then $\delta(X)+1 \le c(X)\le 2 \delta(X)$, and $c(X) =  2 \delta(X)$
if and only if $\OX0$ is a Gorenstein ring.
\end{proposition}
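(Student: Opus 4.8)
The three parts are essentially a compendium of known facts about reduced curve singularities, so my plan is to assemble them from standard sources rather than build from scratch.

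For part $(i)$, the plan is to use the genus-like formula relating the Milnor number to the delta invariant. I would invoke the classical formula $\mu(X) = 2\delta(X) - r + 1$ from Buchweitz-Greuel \cite{BG80}, which is precisely the reference already cited when $\mu(X)$ is introduced. The quickest self-contained route is via the exact sequence $0 \to \OX0 \to \overline{\OX0} \to \overline{\OX0}/\OX0 \to 0$ together with the identification of $\mu(X)$ with $\dim_{\mathbb C}(\omega_{(X,0)}/d\,\OX0)$ and the fact that for the normalization $\overline{X}$, which is a disjoint union of $r$ smooth branches, $\omega_{\overline{X}}/d\,\overline{\OX0}$ vanishes. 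Chasing dimensions through $\gamma_X$ and using that $\overline{\OX0}/\OX0$ has length $\delta(X)$ while the "cokernel on the differentials side" contributes the $-r+1$, one recovers the formula; alternatively one just cites \cite{BG80} directly.

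For part $(ii)$, the chain $e_0(X)-1 \le e_1(X)$ is Northcott's inequality for the Hilbert-Samuel coefficients of a one-dimensional Cohen-Macaulay local ring; $e_1(X) \le \delta(X)$ follows because $e_1(\max_X)$ measures, via the difference $\HP^1_X(n) - \HF^1_X(n)$ summed up, exactly the length drop captured by passing to the normalization, and $\delta(X) \le \mu(X)$ is immediate from $(i)$ since $r \ge 1$. The bound $e_1(X) \le \binom{e_0(X)}{2} - \binom{n-1}{2}$ I would obtain from the known bound on $e_1$ in terms of multiplicity and embedding dimension (an Abhyankar-type / Elias-type estimate): writing $h = \HF^0_X(1) = n-1$ for the first difference of the Hilbert function and using $e_1 \le \sum_{i \ge 1} i\, h_i$ together with the constraints $\sum h_i = e_0 - 1$, $h_i \le h$, the maximum of the right-hand side is achieved in the most spread-out configuration, giving the stated binomial expression.

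For part $(iii)$, the inequalities $\delta(X)+1 \le c(X) \le 2\delta(X)$ and the Gorenstein characterization $c(X) = 2\delta(X)$ are exactly the symmetry-of-the-value-semigroup statement, due in the branch case to Apéry and in general to the duality between $\overline{\OX0}/\OX0$ and $\OX0/\mathcal C$ via the canonical module. The plan: since $\omega_{(X,0)}$ is the canonical module, one has $\dim_{\mathbb C}(\omega_{(X,0)}/\OX0) = \delta(X)$ as well (both $\OX0$ and $\omega_{(X,0)}$ sit between $\mathcal C$-related lattices), and $\OX0$ is Gorenstein iff $\omega_{(X,0)} \cong \OX0$ iff the two filtrations match up, which forces $c(X) = 2\delta(X)$; the general inequality $c(X) \le 2\delta(X)$ comes from $\dim_{\mathbb C}(\overline{\OX0}/\mathcal C) = \delta(X) + \dim_{\mathbb C}(\OX0/\mathcal C) \le \delta(X) + \delta(X)$, using $\mathcal C \subseteq \OX0$ and $\dim_{\mathbb C}(\OX0/\mathcal C) \le \dim_{\mathbb C}(\overline{\OX0}/\OX0) = \delta(X)$. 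The lower bound $c(X) \ge \delta(X)+1$ holds because when $X$ is singular, $\mathcal C \neq \overline{\OX0}$, so $\dim_{\mathbb C}(\OX0/\mathcal C) \ge 1$.

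The main obstacle I anticipate is part $(ii)$, specifically pinning down $e_1(X) \le \delta(X)$ with a clean argument and getting the combinatorial bound $e_1(X) \le \binom{e_0(X)}{2} - \binom{n-1}{2}$ sharp — the other two parts are direct citations or short length counts, but the $e_1$ estimates require the extremal-configuration analysis of admissible Hilbert functions, which is where the real work (or the right reference) is needed.
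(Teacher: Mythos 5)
Your proposal matches the paper's treatment: the paper proves all three parts purely by citation --- $(i)$ to Buchweitz--Greuel, $(ii)$ to Buchweitz--Greuel, Northcott and Elias, $(iii)$ to Serre and Bertin--Carbonne --- and your plan invokes essentially the same sources, with your added sketches consistent with what those references actually prove. The only caveat is that your inline heuristics for $e_1(X)\le\delta(X)$ and for the bound $e_1(X)\le\binom{e_0(X)}{2}-\binom{n-1}{2}$ are not complete arguments (in particular the ``most spread-out configuration'' optimization needs the extra constraints on admissible Hilbert functions supplied by the cited work of Elias), but you correctly flag these as the steps requiring the external references, which is exactly how the paper handles them.
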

\begin{proof}
$(i)$  \cite[Proposition 1.2.1]{BG80}.
$(ii)$ \cite[Proposition 1.2.4 (i)]{BG80}, \cite{Nor59a},  \cite{Eli90}, \cite{Eli01}.
$(iii)$ \cite[Proposition 7, pag. 80]{Ser59}, and \cite{BC77}.
\end{proof}

\medskip
\noindent
{\sc ACKNOWLEDGMENTS.} The author thanks the referee for improving some results of section two.

\section{Canonical ideals}

The first aim of this section is to find conditions on a $\max$-primary ideal $I$ to be a canonical ideal.

\begin{lemma}
\label{push}
Let $(R,\max)$ be a one-dimensional Cohen-Macaulay local ring and let $I$ be an $\max$-primary ideal of $R$.
Let $x\in\max$ be a parameter of $R$.
\begin{enumerate}
\item[(i)] If $R/x^n I$ is a Gorenstein ring for some $n\ge 1$, then $R/I$ is a Gorenstein ring and
$(x^n I:_R\max)=x^n(I:_R\max)$.
\item[(ii)] Assume that $I\subset xR$. If $R/I$ is a Gorenstein ring, then $R/x^nI$ is a Gorenstein
ring for all $n\ge 1$.
\end{enumerate}
\end{lemma}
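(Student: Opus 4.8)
The plan is to reduce everything to statements about Artinian quotients and socles, since a one-dimensional Cohen–Macaulay local ring modulo a parameter-colength ideal is Artinian, and for Artinian local rings Gorenstein is equivalent to one-dimensional socle. First I would fix the parameter $x$ and observe that, since $R$ is Cohen–Macaulay of dimension one, $x$ is a nonzerodivisor; hence multiplication by $x^n$ gives an isomorphism $R/I \xrightarrow{\ \sim\ } x^nR/x^nI$ of $R$-modules. For part (i), the hypothesis is that $R/x^nI$ is Artinian Gorenstein, so $\soc(R/x^nI) = (x^nI :_R \max)/x^nI$ is one-dimensional over $\res$. The key point is to relate this socle to the socle of $R/I$. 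Because $x$ is a nonzerodivisor, $(x^nI :_R \max) \supseteq x^n(I :_R \max)$, and applying the isomorphism "multiplication by $x^n$" to the inclusion $I \subseteq (I:_R\max)$ shows $x^n(I:_R\max)/x^nI \cong (I:_R\max)/I = \soc(R/I)$ as $R$-modules. So I would argue that $\soc(R/I)$ embeds into $\soc(R/x^nI)$; since the latter is one-dimensional and $\soc(R/I)$ is nonzero (because $R/I$ is a nonzero Artinian ring), the embedding forces equality of dimensions, hence $\soc(R/I)$ is one-dimensional and $R/I$ is Gorenstein. Running the same comparison at the level of the colon ideals themselves (not just their images mod $I$, mod $x^nI$) then yields $(x^nI:_R\max) = x^n(I:_R\max)$: one containment is automatic from $x$ being a nonzerodivisor, and the reverse follows once we know both sides have the same (finite) colength over $x^nI$, which is exactly the socle-dimension equality just established.

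For part (ii), I am given $I \subseteq xR$ and $R/I$ Gorenstein (Artinian), and I want $R/x^nI$ Gorenstein for all $n \ge 1$; by induction it suffices to treat $n=1$, replacing $I$ by $x^{n-1}I$, but I must be careful that the inductive hypothesis "$x^{n-1}I \subseteq xR$" may fail — actually $x^{n-1}I \subseteq x^{n-1}\cdot xR = x^nR \subseteq xR$ for $n \ge 1$, so the hypothesis propagates and the induction is legitimate. So I reduce to: if $I \subseteq xR$ and $R/I$ is Gorenstein, then $R/xI$ is Gorenstein. Here I would compute $\soc(R/xI) = (xI :_R \max)/xI$ and show it is one-dimensional. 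Since $x$ is a nonzerodivisor, $(xI:_R\max) = x(I:_R\max) + (xI :_R \max)\cap(\text{ann-type stuff})$ — more precisely, I claim $(xI :_R \max) = x(I :_R \max)$ when $I \subseteq xR$. The inclusion $\supseteq$ is clear. For $\subseteq$: if $z\max \subseteq xI \subseteq xR$, then since $x$ is a nonzerodivisor and $xI \subseteq xR$, writing... the cleanest route is that $z\max \subseteq xR$ with $x$ a nonzerodivisor does not immediately give $z \in xR$, so instead I would use $I \subseteq xR$ to write $I = xJ$ for the ideal $J = (I :_R x) \supseteq I$ (note $I \subseteq xR$ means precisely $xJ = I$ with $J = (I:x)$), and then $xI = x^2 J$, and $(x^2J :_R \max) = x^2(J :_R \max)$ would need the same kind of argument. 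This suggests that the right lemma to isolate is: \emph{for a nonzerodivisor $x$ and any ideal $I$ with $I \subseteq xR$, one has $(xI :_R \max) = x(I :_R \max)$}, which I would prove by the factorization $I = xJ$ and a direct element chase using that $x$ is a nonzerodivisor. Granting that, $\soc(R/xI) = x(I:_R\max)/xI \cong (I:_R\max)/I = \soc(R/I)$, which is one-dimensional, so $R/xI$ is Gorenstein.

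The main obstacle, and the step I expect to require the most care, is the identity $(x^nI :_R \max) = x^n(I :_R \max)$ — in part (i) it is a \emph{conclusion} extracted from the Gorenstein hypothesis on $R/x^nI$, while in part (ii) an analogous identity (valid because $I \subseteq xR$) is the \emph{engine} that transports the one-dimensional socle from $R/I$ to $R/x^nI$. In both cases the subtlety is that from $z\max \subseteq x^nR$ one cannot in general conclude $z \in x^nR$, so the inclusion $(x^nI :_R\max) \subseteq x^n(\,\cdot\,)$ is not formal; in (i) it is forced by a length/socle count, and in (ii) it is forced by the extra hypothesis $I \subseteq xR$ via the factorization $I = xJ$. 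Once these colon-ideal identities are in hand, the Gorenstein conclusions follow immediately from the characterization of Artinian Gorenstein rings by one-dimensionality of the socle, together with the isomorphism $R/I \cong x^nR/x^nI$ coming from $x$ being a nonzerodivisor. I would also note at the outset that $R/x^nI$ being Cohen–Macaulay of dimension zero (i.e. Artinian) is automatic since $x^nI$ is again $\max$-primary, so "Gorenstein" unambiguously means "zero-dimensional Gorenstein = Artinian with simple socle" throughout.
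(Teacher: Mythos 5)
Your overall architecture is the same as the paper's: compare $\soc(R/I)$ with $\soc(R/x^nI)$ through the injection $R/I\xrightarrow{\;x^n\;}R/x^nI$, and your part (i) (the socle of $R/I$ embeds via $x^n$ into the one-dimensional socle of $R/x^nI$, forcing both the Gorenstein conclusion and the identity $(x^n I:_R\max)=x^n(I:_R\max)$ by a length count) is complete and is exactly the paper's argument in different clothing. The one real gap is in part (ii): everything reduces, as you say, to the containment $(xI:_R\max)\subseteq x(I:_R\max)$ when $I\subseteq xR$, but you never actually prove it. Your proposed route via the factorization $I=xJ$ with $J=(I:_Rx)$ goes in a circle --- as you yourself note, $(x^2J:_R\max)$ ``would need the same kind of argument'' --- and you end by asserting the lemma rather than establishing it.

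The missing observation is a one-liner, and it is precisely the paper's move: if $\alpha\in(xI:_R\max)$ then, since $x\in\max$, we have $x\alpha\in\alpha\max\subseteq xI\subseteq x\cdot xR=x^2R$, and cancelling the nonzerodivisor $x$ gives $\alpha\in xR$. Writing $\alpha=x\beta$, the relation $x\beta\max\subseteq xI$ cancels to $\beta\max\subseteq I$, so $\alpha\in x(I:_R\max)$. (The paper in fact only needs the first cancellation, $\alpha\in x^nR$: it deduces that the socle of $R/x^nI$ maps to zero in $R/x^nR$, hence by the left-exact sequence of socles it equals the image of the one-dimensional socle of $R/I$. Note also that the paper handles general $n$ in one stroke, via $x\alpha\in x^nI\subseteq x^{n+1}R$, so your induction, while legitimate, is unnecessary.) With this element chase inserted, your part (ii) closes and the whole proof agrees with the paper's.
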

\begin{proof}
\noindent
$(1)$ The short exact sequence
$$
0\longrightarrow
R/I
\stackrel{x^n}{\longrightarrow}
R/x^nI
\longrightarrow
R/x^nR
\longrightarrow 0
$$
yields the exact sequence
$$
0\longrightarrow
\Hom_R(R/\max,R/I)
\stackrel{x^n}{\longrightarrow}
\Hom_R(R/\max,R/x^nI)
\longrightarrow
\Hom_R(R/\max,R/x^nR)
$$
of socles, which shows
$$
\Hom_R(R/\max,R/I)
\stackrel{x^n}{\cong}
\Hom_R(R/\max,R/x^nI),
$$
because $\Hom_R(R/\max,R/I)\neq 0$ and $\length_R(\Hom_R(R/\max,R/x^nI))=1$.
Hence $R/I$ is a Gorenstein ring.

\noindent
$(ii)$
Let $\alpha\in (x^n I:_R\max)$.
Then since $x \alpha\in x^n I\subset x^{n+1}R$, we get $\alpha\in x^n R$,
which shows $\pi((x^n I:_R\max)/x^nI)=0$
where $\pi:R/x^nI\longrightarrow R/x^nR$ denotes the canonical epimorphism.
Hence we get the isomorphism
$$
\Hom_R(R/\max,R/I)
\stackrel{x^n}{\cong}
\Hom_R(R/\max,R/x^nI),
$$
in the exact sequence
$$
0\longrightarrow
\Hom_R(R/\max,R/I)
\stackrel{x^n}{\longrightarrow}
\Hom_R(R/\max,R/x^nI)
\longrightarrow
\Hom_R(R/\max,R/x^nR)
$$
of socles.
Thus
$\length_R(\Hom_R(R/\max,R/x^nI))=1$, so that $R/x^nI$ is a Gorenstein ring for all $n\ge 1$.
\end{proof}

\begin{proposition}
\label{push2}
Let $I$ be an $\max$-primary ideal of $R$.
Then $I$ is a canonical ideal of $R$ if and only if there exists a parameter $x\in\max$ of $R$ such that $R/xI$
is a Gorenstein ring.
\end{proposition}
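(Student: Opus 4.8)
The plan is to relate the canonical-ideal property of $I$ to the Gorenstein property of $R/xI$ by passing through the theory of the canonical module, using that a $\max$-primary ideal $\mathfrak a$ of a one-dimensional Cohen-Macaulay local ring $R$ is a canonical ideal precisely when $R/y\mathfrak a$ is Gorenstein for one (equivalently every) parameter $y$ lying in $\mathfrak a$; this is the standard characterization (see \cite[Chapter 3]{BH97}) that I would cite, namely that $\mathfrak a\cong\omega_R$ as $R$-modules iff $R/y\mathfrak a$ is Gorenstein. The subtlety here is that $I$ is only assumed $\max$-primary, not contained in $xR$, so I cannot apply that characterization directly to $I$; the role of \lemref{push} is exactly to bridge this gap.

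For the forward direction, assume $I$ is a canonical ideal. After possibly replacing $I$ by $zI$ for a suitable parameter $z$ — which does not change the isomorphism class of $I$ as a module, hence preserves the canonical property — I may assume $I\subset xR$ for a parameter $x$; concretely, pick any parameter $z\in\max$, so that $I\subset R$ gives $zI\subset zR$, and note $zI\cong I\cong\omega_R$. Now with $I\subset xR$ (after this normalization, renaming $zI$ as $I$ and $z$ as $x$), the standard characterization of canonical ideals gives that $R/xI$ is Gorenstein, which is what we want. I should be careful to phrase the statement so that the parameter $x$ produced is allowed to depend on $I$; the proposition only asserts existence of such an $x$, so this normalization is harmless.

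For the converse, suppose $x\in\max$ is a parameter with $R/xI$ Gorenstein. By \lemref{push}(i) applied with $n=1$ we get that $R/I$ is Gorenstein and $(xI:_R\max)=x(I:_R\max)$. The point of the latter identity, combined with $R/xI$ being Gorenstein, is that it forces $xI$ to be a canonical ideal of $R$: since $xI\subset xR$ and $R/xI$ is Gorenstein, the standard characterization again yields $xI\cong\omega_R$. But $xI\cong I$ as $R$-modules (multiplication by the nonzerodivisor $x$ is an isomorphism of $R$-modules $I\to xI$), so $I\cong\omega_R$ as well, i.e. $I$ is a canonical ideal.

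The main obstacle, and the only place where real content enters, is justifying the equivalence ``$R/y\mathfrak a$ Gorenstein $\iff$ $\mathfrak a$ a canonical ideal'' for $\mathfrak a\subset yR$ — this is where one uses that $\omega_{R/yR}\cong\omega_R/y\omega_R$ together with the fact that for the Artinian Gorenstein ring $R/yR$-structure one recovers $\mathfrak a/y\mathfrak a\cong\omega_{R/yR}$ precisely when $\mathfrak a\cong\omega_R$; since \lemref{push}(i) already packages the socle computation that makes the ``$\Rightarrow$'' half of this work, the remaining effort is to invoke \cite[Chapter 3]{BH97} (or \cite{HK71}) for the reduction-to-Artinian step and the identification $\omega_R/x\omega_R\cong\omega_{R/xR}$. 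Everything else is the bookkeeping of replacing $I$ by $xI$ and using that this preserves isomorphism type.
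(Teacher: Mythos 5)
The forward direction is fine in substance (for any parameter $x$, $xI\cong I\cong\omega_R$ because $x$ is a nonzerodivisor, and the quotient of $R$ by an ideal isomorphic to $\omega_R$ is Gorenstein, \cite[Proposition 3.3.18]{BH97}); your detour through replacing $I$ by $zI$ is unnecessary and as written only produces $R/z^2I$ Gorenstein, though $z^2$ is still a parameter so no harm is done. The problem is the converse, the only direction the paper actually proves. Your whole argument there rests on the ``standard characterization'' in the form ``$\mathfrak a\subset xR$ and $R/\mathfrak a$ Gorenstein $\Rightarrow$ $\mathfrak a$ canonical'' (this is how you apply it to $\mathfrak a=xI$; note it is not the statement you first announce, which involves $R/y\mathfrak a$). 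That implication is not a quotable fact from \cite[Chapter 3]{BH97}: what is standard there is the opposite implication and the isomorphism $\omega_R/x\omega_R\cong\omega_{R/xR}$. Writing $\mathfrak a=xJ$ with $J=(\mathfrak a:_Rx)$, the implication you cite is literally equivalent to the ``if'' part of the proposition, so the argument is circular. The paper's own example $\max\subset\res[\![t^5,t^6,t^7]\!]$ shows that ``Gorenstein quotient'' alone does not imply ``canonical'', so whatever you invoke must carry the full weight of the hypothesis $\mathfrak a\subset xR$ --- and that is exactly the content to be proved.

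The missing idea is the passage from a socle computed in $R$ to a socle computed in the total ring of fractions $Q$. Being canonical amounts to $\length_R\bigl((I:_Q\max)/I\bigr)=1$ (the socle of ${\rm H}^1_{\max}(I)\cong Q/I$ is one-dimensional), whereas ``$R/I$ Gorenstein'' only gives $\length_R\bigl((I:_R\max)/I\bigr)=1$. The paper closes this gap precisely with the identity $(xI:_R\max)=x(I:_R\max)$ from \lemref{push}(i): for $\alpha\in(I:_Q\max)$ one has $x\alpha\in(xI:_R\max)=x(I:_R\max)$, hence $\alpha\in(I:_R\max)\subset R$, so the two colons coincide and the $Q$-socle has length one. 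You quote that identity but never use it. Your alternative sketch --- establish $I/xI\cong\omega_{R/xR}$ and then descend to $I\cong\omega_R$ --- could in principle be completed, but neither half is carried out: you do not derive $I/xI\cong\omega_{R/xR}$ from the hypotheses (this again needs the socle of $R/xI$ to lie in $I/xI$, i.e.\ the lemma), and the descent statement ``$M$ maximal Cohen--Macaulay with $M/xM\cong\omega_{R/xR}$ implies $M\cong\omega_R$'' is itself a nontrivial theorem that you neither prove nor precisely locate. As it stands the proposal has a genuine gap at its central step.
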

\begin{proof}
We have only to prove the ${\it if\; part}$.
Let $Q$ the total ring of fractions of $R$.
Since ${\rm H}^1_{\max}(I)\cong Q/I$, it suffices to see
$\length_R(I:_Q\max)/I=1$.
Let $\alpha\in (I:_Q\max)$.
Then since $x\alpha\in I\subset R$ and $(x\alpha)\max \subset xI$, we have
$$
x\alpha\in (xI:_R\max)=x(I:_R\max)
$$
(see the proof of \lemref{push} (i)).
Hence $I\subsetneq (I:_Q\max)\subset (I:_R\max)$, so that
$$
\length_R(I:_Q\max)/I=\length_R(I:_R\max)/I=1
$$
as wanted.
\end{proof}

In the next result we prove that an $\max$-primary Gorenstein ideal contained in a high  power of the maximal ideal is a canonical ideal.
Notice that this result cannot be extended to any $\max$-primary Gorenstein ideal.
Let $R$ be a one-dimensional Cohen-Macaulay local ring of Cohen-Macaulay type $2$ and embedding dimension $b\ge 3$,
for instance $R=\res[\![t^5, t^6, t^7]\!]$.
 Then the maximal ideal is a Gorenstein ideal minimally generated by $b\ge 3$ elements.
Since the minimal number of generators of a canonical ideal is the Cohen-Macaulay type of $R$, $\max$ is not a canonical ideal.

\begin{corollary}
\label{existence}
Let $x\in \max$ and assume that $\max^{r+1}=x\max^r$ for some $r\ge 0$.
Let $I$ be an $\max$-primary ideal of $R$ such that $I\subset \max^{r+1}$.
If $R/I$ is a Gorenstein ring, then $I$ is a canonical ideal of $R$, whence ${\rm tot}(\widehat R)$ is a Gorenstein ring.
\end{corollary}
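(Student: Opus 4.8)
The plan is to reduce \corref{existence} directly to \propref{push2}, which says that an $\max$-primary ideal $I$ is a canonical ideal precisely when $R/xI$ is Gorenstein for some parameter $x$. So the whole task is to produce such a parameter. The natural candidate is the very element $x$ appearing in the hypothesis $\max^{r+1}=x\max^r$: first I would check that this $x$ is indeed a parameter of $R$. Since $R$ is one-dimensional Cohen-Macaulay, a parameter is just a non-zerodivisor in $\max$; and the condition $\max^{r+1}=x\max^r$ forces $x$ to be a superficial-type element for $\max$ (indeed it says $x$ generates $\max$ modulo $\max^{r+1}$ in the associated graded sense), which in a one-dimensional Cohen-Macaulay ring means $x$ is a parameter — the point being that $x$ cannot lie in any minimal prime since $\max^{r+1}=x\max^r$ would then give, after localizing at such a prime, a contradiction with Nakayama unless $\max^r$ already vanishes there, impossible in dimension one. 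I would state this briefly.

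Next, the key inclusion: I claim $I\subset xR$. Indeed $I\subset\max^{r+1}=x\max^r\subset xR$, which is immediate from the hypothesis. Now I have all the ingredients for \lemref{push}(ii): $R/I$ is Gorenstein by assumption, and $I\subset xR$, so \lemref{push}(ii) yields that $R/x^nI$ is Gorenstein for every $n\ge 1$, in particular $R/xI$ is Gorenstein. Applying \propref{push2} with this parameter $x$ then gives that $I$ is a canonical ideal of $R$.

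Finally, for the last clause I would invoke the characterization recalled in the introduction (\cite[Satz 6.21]{HK71}): a one-dimensional Cohen-Macaulay local ring $R$ admits a canonical ideal if and only if the total ring of fractions of its $\max$-adic completion $\widehat R$ is Gorenstein. Since we have just exhibited a canonical ideal $I$ of $R$, it follows that ${\rm tot}(\widehat R)$ is Gorenstein, completing the proof.

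I do not anticipate a serious obstacle here: the statement is essentially an assembly of \lemref{push}(ii) and \propref{push2}, both already available, together with the trivial inclusion $\max^{r+1}\subset xR$. The only point requiring a word of care is verifying that the hypothesis $\max^{r+1}=x\max^r$ genuinely makes $x$ a parameter (a non-zerodivisor) rather than merely an element of $\max$; this is where the one-dimensional Cohen-Macaulay assumption is used, and it is a standard fact about superficial elements, so it should be dispatched in a line or two.
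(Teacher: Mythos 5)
Your proof is correct and follows essentially the same route as the paper: the paper's own argument is precisely the observation that $I\subset \max^{r+1}\subset xR$, followed by an appeal to \lemref{push}(ii) and \propref{push2}. Your extra remark that $x$ is genuinely a parameter (since $\max^{r+1}\subset xR$ forces $R/xR$ to have finite length, hence $x$ is a non-zerodivisor in the one-dimensional Cohen--Macaulay ring $R$) is a point the paper leaves implicit, and it is handled correctly.
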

\begin{proof}
The assertion follows from \lemref{push} and \propref{push2}, since
$I\subset \max^{r+1}\subset x R$.
\end{proof}

\begin{remark}
\label{upperpost}
Recall  that
 $\pn(R)\le e_0(R)-1$,  \cite[Proposition 12.14]{Mat77}.
 Hence,
 if $I\subset \max^{e_0(R)}$ is an $\max$-primary ideal  such that
$R/I$ is a Gorenstein ring then $I$ is a canonical ideal.
 \end{remark}

Last result pointed out that a basic problem in Commutative Algebra  is to find methods to construct Gorenstein ideals.
We know that complete intersection ideals are Gorenstein; by a result of Serre,
in codimension two to be Gorenstein is equivalent to be a complete intersection; in codimension three, Gorenstein ideals are the ideals generated by the Pfaffians of  skew-symmetric matrices, \cite{BE77}.
In \propref{anti} we show how to construct canonical ideals, that are Gorenstein, from a
Hilbert-Burch resolution.
On the other hand, notice  that if $I$ is a canonical ideal and  $y\in \max$ is a non-zero divisor of $R$
then $y^tI$, $t\ge 1$, is a canonical ideal as well,  but
the length
of $R/ yI$ is not under control.
In fact, for all $t\ge 1$ we have, \cite[Theorem 12.5]{Mat77},
\begin{eqnarray*}
  \length_{R}(R/ y^tI)&=& \length_{R}(R/ I)+ \length_{R}(I/ y^t I) \\
  &=& \length_{R}(R/ I)+ t \length_{R}(R/ (y))\\
  &\ge& \length_{R}(R/ I)+ t e_0(R).
\end{eqnarray*}

In the next result  we will  find
canonical ideals for which we compute  the multiplicity or the socle degree;
in the second part  we take  $t\ge 4\mu(X)+1$, where $(X,0)$ is a reduced curve singularity,
because  we have to consider
a high $t$ in \thmref{chargor}.
See \exref{exp-example} for an explicit application of the next result.

\begin{proposition}
\label{canideal}
Let $(X,0)$ be a reduced curve singularity.

\noindent
$(i)$
Let $z$ be a degree $t\ge 2 \mu(X)+1$ superficial element of $\OX0$.
Then  the $\OX0$-module $z \omega_{(X,0)}$
is a canonical ideal of $(X,0)$ such that $\OX0/ z \omega_{(X,0)}$ is a Gorenstein ring of colength $t  e_0(X) -  2 \delta(X)$.

\noindent
$(ii)$
For  $t\ge 4 \mu(X)+1$ the socle degree of  $\OX0/ z \omega_{(X,0)}$ is
at most
$$
e_0(X)(t-2 \mu(X)-1) + 2 \delta(X) +e_1(X)+ 2(1-r).
$$
This number is bounded above by $\delta(X)(4 e_0(X) +3)$.

\noindent
$(iii)$
If $(X,0)$ is Gorenstein then $\OX0$ is a canonical ideal and for every superficial element $z$ of degree
$t\ge 1$, $ z \OX0$ is a canonical
ideal.
\end{proposition}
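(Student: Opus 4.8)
The plan is to treat the three parts in sequence, the first being the core of the argument.

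For part $(i)$, I would start from the known presentation of the canonical module of a reduced curve singularity in terms of Rosenlicht differentials, so that $\omega_{(X,0)}$ sits inside $\mathrm{tot}(\OX0)$ as a fractional ideal; multiplying by a suitable non-zero-divisor I may assume $\omega_{(X,0)} \subset \OX0$ is already a canonical ideal. The first task is to control $\length_R(\OX0/\omega_{(X,0)})$: by duality (the canonical module self-dualizes) one gets $\length(\OX0/\omega_{(X,0)}) = \length(\omega_{(X,0)}/\mathcal C \cdot \text{something})$—more concretely I would use that $\length(\overline{\OX0}/\OX0) = \delta(X)$ and $\length(\overline{\OX0}/\omega_{(X,0)}) = \delta(X)$ as well (both hold because $\omega$ and $\OX0$ are ``dual halves'' between $\mathcal C$ and $\overline{\OX0}$), giving $\length(\OX0/\omega_{(X,0)})$ in terms of $\delta(X)$; combined with the fact that $\omega$ is maximal Cohen–Macaulay so $\length(\omega/z\omega) = t\,e_0(X)$ for a superficial element $z$ of degree $t$, Matsumura's additivity $\length(\OX0/z\omega) = \length(\OX0/\omega) + \length(\omega/z\omega)$ yields the colength $t e_0(X) - 2\delta(X)$. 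The Gorenstein conclusion is then immediate from \propref{push2}: $z\omega_{(X,0)}$ is a canonical ideal, and $\OX0$ modulo a canonical ideal is Gorenstein—this is the defining property of canonical ideals (the canonical module of $\OX0/z\omega$ is $\omega/z\omega \cong \OX0/z\omega$). The degree bound $t \ge 2\mu(X)+1$ is what guarantees $z$ can be chosen superficial \emph{and} that multiplication by $z$ lands $\omega$ inside a high enough power of $\max_X$ so that the numerics (postulation effects) stabilize; I would invoke \remref{upperpost} / \propref{basic}$(ii)$ here, using $\pn \le e_0 - 1 \le e_1 \le \delta \le \mu$ to justify that $t \ge 2\mu+1$ is comfortably past the postulation number.

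For part $(ii)$, the socle degree $s(\OX0/z\omega)$ is the largest $s$ with $\max_X^s(\OX0/z\omega) \ne 0$, equivalently the largest $s$ with $\max_X^s \not\subset z\omega + \text{(stuff)}$. I would bound it by comparing the Hilbert–Samuel function of $\OX0/z\omega$ against its eventual polynomial: since $\length(\OX0/z\omega) = t e_0(X) - 2\delta(X)$ and $\HF^1_X(n) = e_0(X)(n+1) - e_1(X)$ for $n \gg 0$, the socle degree is at most the least $n$ for which $\HP^1_X(n) \ge \length(\OX0/z\omega)$, i.e. roughly $n$ with $e_0(X)(n+1) - e_1(X) \ge t e_0(X) - 2\delta(X)$, which rearranges to $n \ge t - 1 - \frac{2\delta(X) - e_1(X)}{e_0(X)}$. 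Inserting the requirement $t \ge 4\mu(X)+1$ to stay in the polynomial range and cleaning up using $\mu = 2\delta - r + 1$ from \propref{basic}$(i)$ should produce exactly $e_0(X)(t - 2\mu(X) - 1) + 2\delta(X) + e_1(X) + 2(1-r)$. The final cruder bound $\delta(X)(4 e_0(X) + 3)$ then follows by substituting $t = 4\mu(X)+1$ (the worst case, as the expression is increasing in $t$ — wait, it is, so one uses the hypothesis $t \ge 4\mu+1$ together with the fact that in \thmref{chargor} $t$ is fixed at that value), $\mu \le 2\delta$, $e_1 \le \delta$, $r \ge 1$, and collecting terms.

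For part $(iii)$, if $(X,0)$ is Gorenstein then $\OX0 \cong \omega_{(X,0)}$, so $\OX0$ itself (the unit ideal's worth) — more precisely, a fractional-ideal representative of $\omega$ that equals $\OX0$ — is a canonical ideal; and then for any non-zero-divisor $z$ of degree $t \ge 1$, $z\OX0$ is a principal (hence complete intersection, hence Gorenstein) ideal with $\OX0/z\OX0$ Gorenstein, so $z\OX0$ is canonical by \propref{push2} applied with the parameter $z$ itself. The main obstacle I anticipate is part $(i)$'s length computation: making rigorous that $\length(\OX0/\omega_{(X,0)}) = 2\delta(X)$ for a correctly normalized fractional-ideal representative — this rests on the symmetry $\mathcal C \subseteq \omega \subseteq \overline{\OX0}$ with $\omega/\mathcal C$ dual to $\overline{\OX0}/\OX0$ (Rosenlicht/Serre duality, \cite[IV 9]{Ser59}), plus \propref{basic}$(iii)$ linking $c(X)$ and $\delta(X)$ — and in tracking precisely why $t \ge 2\mu(X)+1$ (not merely $t \ge \pn(\OX0)+1$) is the bound forced by the superficiality requirement together with the containment $z\omega \subset \max_X^{\,t-\text{(something)}}$.
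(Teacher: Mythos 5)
There are two genuine gaps. In part $(i)$ your normalization of the canonical module undermines the length count. The statement forces the particular representative used in the paper, namely the Rosenlicht module satisfying $\OX0\subseteq \nu_*{\mathcal O}_{\overline X}\subseteq \omega_{(X,0)}$; the residue pairing identifies $\omega_{(X,0)}/\nu_*\Omega_{\overline X}$ with the dual of $\nu_*{\mathcal O}_{\overline X}/\OX0$, whence $\length(\omega_{(X,0)}/\OX0)=\delta(X)+\delta(X)=2\delta(X)$, and then
$\length(\OX0/z\omega_{(X,0)})=\length(\omega_{(X,0)}/z\omega_{(X,0)})-\length(\omega_{(X,0)}/\OX0)=t\,e_0(X)-2\delta(X)$.
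If instead you first replace $\omega_{(X,0)}$ by a multiple contained in $\OX0$, additivity gives $\length(\OX0/z\omega)=\length(\OX0/\omega)+t\,e_0(X)\ge t\,e_0(X)$, which is strictly larger than the claimed colength; and your auxiliary length claims ($\length(\overline{\OX0}/\omega_{(X,0)})=\delta(X)$, the ``dual halves between $\mathcal C$ and $\overline{\OX0}$'') are mutually inconsistent --- with $\omega\subset\OX0\subset\overline{\OX0}$ they would force $\length(\OX0/\omega)=0$. You also never actually prove $z\omega_{(X,0)}\subseteq\OX0$, which is the real role of the hypothesis $t\ge 2\mu(X)+1$: the paper shows $\max^{\mu(X)+c(X)}\omega_{(X,0)}\subseteq\OX0$ via the conductor and the pairing; postulation numbers play no role here.

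In part $(ii)$ the bounding mechanism you propose is false: the socle degree of $\OX0/I$ is not bounded by the least $n$ with $\HP^1_X(n)\ge\length(\OX0/I)$. For the node $\OX0=\mathbb C[\![x,y]\!]/(xy)$ and $I=(x^{100},y^2)$ one has $\length(\OX0/I)=101$, so your recipe gives $n=50$, while the socle degree is $99$; length alone does not control socle degree. The paper's argument instead uses that $z\omega_{(X,0)}\subseteq\max^{2\mu(X)+1}$ and applies the elementary bound $s(M)\le\length(M)-1$ to $M=\max^{2\mu(X)+1}/z\omega_{(X,0)}$, yielding
$s(\OX0/z\omega_{(X,0)})\le\length(\OX0/z\omega_{(X,0)})-\length(\OX0/\max^{2\mu(X)+1})+2\mu(X)$,
from which the stated expression follows by evaluating $\length(\OX0/\max^{2\mu(X)+1})=e_0(X)(2\mu(X)+1)-e_1(X)$ and substituting $\mu=2\delta-r+1$; your formula $n\approx t-1-(2\delta-e_1)/e_0$ does not algebraically reduce to the stated bound. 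Part $(iii)$ is fine: a superficial element is a non-zero-divisor, so $z\OX0\cong\OX0\cong\omega_{(X,0)}$.
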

\begin{proof}
$(i)$
Since $\OX0$ is a one-dimensional reduced ring we know that $\omega_{(X,0)}$
is a sub-$\OX0$-module of $\mathrm{tot}(\OX0)$, \cite[3.3.18]{BH97}.
Let us consider the perfect pairing, \cite[Chapter IV]{Ser59},
$$
\begin{array}{ccccc}
  \frac{\nu_* {\mathcal O}_{\overline{X}}}{\OX0} & \times& \frac{\omega_{(X,0)}}{\nu_* \Omega_{\overline{X}}} & \stackrel{\eta}{\longrightarrow}  & \mathbb C \\
  F & \times  & \alpha & \longrightarrow  &  \sum_{i=1}^r {\mathrm{res}}_{p_i}(F \alpha)
\end{array}
$$
notice that for all $\lambda\in R$ it holds
$$
\eta(\lambda F, \alpha)= \sum_{i=1}^r {\mathrm{res}}_{p_i}(\lambda F \alpha)=\eta( F, \lambda\alpha).
$$
Hence, since $\max^{c(X)}$ annihilates the quotient  $\nu_* {\mathcal O}_{\overline{X}}/ \OX0$ we get that
$\max^{c(X)}$  annihilates also the quotient
$\omega_{(X,0)}/\nu_* \Omega_{\overline{X}}$.
Hence $\max^{c(X)} \omega_{(X,0)} \subset \nu_* \Omega_{\overline{X}} =  \nu_* {\mathcal O}_{\overline{X}}$.
Again, since $\max^{c(X)} \nu_* {\mathcal O}_{\overline{X}} \subset \OX0$ we get
$\max^{ 2c(X)} \omega_{(X,0)}  \subset \OX0$.
On the other hand the epimorphism of $\OX0$-modules
$$
\frac{\omega_{(X,0)}}{\OX0 d \OX0} \longrightarrow \frac{\omega_{(X,0)}}{\nu_* \Omega_{\overline{X}}}
$$
assures that
$\max^{\mu(X)} \omega_{(X,0)} \subset \nu_* \Omega_{\overline{X}} =  \nu_* {\mathcal O}_{\overline{X}}$.
Hence $\max^{c(X)}\nu_* {\mathcal O}_{\overline{X}} \subset \OX0$
$$
\max^{\mu(X)+c(X)} \omega_{(X,0)} \subset \OX0.
$$
Let us consider the sequence
$$
\max^t \omega_{(X,0)} \subset  \OX0 \subset \nu_* {\mathcal O}_{\overline{X}}=\nu_* \Omega_{\overline{X}} \subset \omega_{(X,0)}.
$$
From the perfect  pairing of the beginning of the proof we get $\dim_{\mathbb C}( \omega_{(X,0)}/ \OX0)=2 \delta(X)$.
Since $z$ is a degree $t\ge 2 \mu(X)$ superficial element of $\OX0$
$$
\dim_{\mathbb C}(\omega_{(X,0)} /z \omega_{(X,0)})=t e_0(X)
$$
\cite[Theorem 12.5]{Mat77}.
From this identity and \propref{basic}  we get the first part of the claim.

\noindent
$(ii)$
We have the following inequalities for $t\ge 4 \mu(X)+1$
\begin{eqnarray*}
s\left(\frac{\OX0}{z \omega_{(X,0)}}\right) &=&  s\left(\frac{\max^{2 \mu(X)+1}}{z \omega_{(X,0)}}\right) + 2 \mu(X) +1 \\
&\le&\length\left(\frac{\max^{2 \mu(X)+1}}{z \omega_{(X,0)}}\right) + 2 \mu(X) \\
&=& \length\left(\frac{\OX0}{z \omega_{(X,0)}}\right)-
\length\left(\frac{\OX0}{\max^{2 \mu(X)+1}}\right)+  2 \mu(X).
\end{eqnarray*}
From \propref{basic} and the first part of this result we get
\begin{eqnarray*}
s\left(\frac{\OX0}{z \omega_{(X,0)}}\right)  &\le& (t e_0(X)- 2 \delta(X))  - (e_0(2 \mu(X)+1)-e_1(X)) + 2 \mu(X)\\ \\
&=&e_0(X)(t-2 \mu(X)-1) + 2 \delta(X) +e_1(X)+ 2(1-r).
\end{eqnarray*}
From \propref{basic} we get that the socle degree
is bounded above by $\delta(X)(4 e_0(X)+3).$

\noindent
$(iii)$
Since any superficial element is a non-zero divisor we get the claim.
\end{proof}


Recall that it is possible to give an "explicit" description of  $\omega_{(X,0)}$ by using Rosenlicht's  regular differential forms,
\cite[IV 9]{Ser59},  \cite[Section 1]{BG80}.
This strategy is very useful in the case  of branches, specially in the  case of monomial curve singularities.

We denote by $\Omega_{\overline{X}}(p_{\cdot})$ the set of meromorphic forms in $\overline{X}$
with a pole at most in the set $\{p_1,\cdots,p_r\}$.
Then Rosenlicht's differential forms are defined as follows:
$\omega^R_{(X,0)}$ is the set of $\nu_*(\alpha)$,  $\alpha \in \Omega_{\overline{X}}(p_{\cdot})$, such that
for all $F\in \OX0$
$$
\sum_{i=1}^r {\rm res}_{p_i}(F \alpha)=0.
$$
Notice that we have a mapping that we also denote by
$
d_R : \OX0 \longrightarrow
\omega_{(X,0)} \longrightarrow
\nu_* \Omega_{\overline{X}} \hookrightarrow
 \omega^R_{(X,0)}.
$
In  \cite[Chap. VIII]{AK70} it  is proved that $\omega_{(X,0)} \stackrel{\phi}{\cong} \omega_{(X,0)}^R$ and
$d_R=\phi d$, where $d: \OX0 \longrightarrow  \omega_{(X,0)}$, see Section 1.
From now on we assume that $(X,0)$ is a branch, i.e. $r=1$.
Let $t\in {\rm tot}(\OX0)$ be a uniformizing   parameter of $(X,0)$, this means
 $\overline{\OX0}\cong \mathbb C [\![t ]\!]$.
We can  consider $\OX0$ as a sub-$\mathbb C$-algebra of
$\mathbb C [\![t ]\!]$ and then we may assume hat there exists a parametrization
of $(X,0)$
$$
\left\{
  \begin{array}{l}
    x_1=t^{n_1} \\
    x_i=f_i(t)\quad i=2,\cdots, n
  \end{array}
\right.
$$
with $n_1=e_0(X)$ and $\val_t(f_i)\ge n_1$, $i=2,\cdots,n$.
Here $\val_t$ denotes the valuation with respect $t$ that is defined in ${\rm tot}(\OX0)$.
Notice that the conductor $\mathcal C$ of the extension $\OX0\subset \overline{\OX0}$
is in particular an ideal of $\overline{\OX0}\cong \mathbb C [\![t ]\!]$ so it is generated by $t^c$.

For all subset $N$ of ${\rm tot}(\OX0)$ we denote
by $\Gamma_N$ the
the set of rational numbers  $\val_t(a)$ for all $a\in N\setminus \{0\}$, we assume that $\Gamma_N$ contains the zero element.

\medskip
The following result is well known. We include it here for the readers' convenience, see
\cite[Example 2.1.9]{GWI78}.

\begin{proposition}
Let $(X,0)$ be a branch.
It holds $\Gamma_{\omega_{(X,0)}}\subset \mathbb Z \setminus (-\Gamma_X - 1)$,
in particular $\Gamma_{\omega_{(X,0)}}\subset -c +\mathbb N$ and there exists
$\alpha \in \omega_{(X,0)}$ such that $\val_t(\alpha)=-c$.
If $(X,0)$ is a monomial curve singularity then $\Gamma_{\omega_{(X,0)}}= \mathbb Z \setminus (-\Gamma_X - 1)$.
\end{proposition}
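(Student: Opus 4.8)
The plan is to compute $\Gamma_{\omega_{(X,0)}}$ directly from the Rosenlicht description. Since $(X,0)$ is a branch, $\overline{\OX0}\cong\mathbb C[\![t]\!]$ and I identify $\omega_{(X,0)}$ with $\omega^R_{(X,0)}$; thus an element of $\omega_{(X,0)}$ is a meromorphic form $\alpha=g\,dt$, with $g=\sum_j g_j\,t^j$ a Laurent series in $t$, such that $\mathrm{res}_t(F\alpha)=0$ for every $F\in\OX0$, where $\mathrm{res}_t(h\,dt)=h_{-1}$, and $\val_t(\alpha)=\val_t(g)$. I may assume $(X,0)$ is singular, since otherwise $\omega_{(X,0)}=\nu_*\Omega_{\overline X}=\mathbb C[\![t]\!]\,dt$ and $c=0$, so all three assertions are clear. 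I will use that the conductor ideal $\mathcal C=t^{c}\mathbb C[\![t]\!]$ is the largest ideal of $\mathbb C[\![t]\!]$ contained in $\OX0$; hence $c+\mathbb N\subseteq\Gamma_X$ (as $t^{n}\in\mathcal C\subseteq\OX0$ for $n\ge c$), and $t^{c-1}\notin\OX0$ (otherwise $t^{c-1}\mathbb C[\![t]\!]\subseteq\OX0$, against maximality of $\mathcal C$).

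First I would establish $\Gamma_{\omega_{(X,0)}}\subseteq\mathbb Z\setminus(-\Gamma_X-1)$. Let $\alpha=g\,dt\in\omega_{(X,0)}$ and put $k=\val_t(g)$; I must show $-k-1\notin\Gamma_X$. If $-k-1\in\Gamma_X$, then $-k-1\ge0$ and one may choose $F\in\OX0$ with $\val_t(F)=-k-1$; then $Fg$ has valuation $-1$ and $(Fg)_{-1}$ is the product of the two nonzero leading coefficients of $F$ and $g$, so $\mathrm{res}_t(F\alpha)\ne0$, a contradiction. The ``in particular'' part follows: since $c+\mathbb N\subseteq\Gamma_X$ we get $-\Gamma_X-1\supseteq\{n\in\mathbb Z:n\le-c-1\}$, hence $\mathbb Z\setminus(-\Gamma_X-1)\subseteq-c+\mathbb N$, so $\Gamma_{\omega_{(X,0)}}\subseteq-c+\mathbb N$ and $\omega_{(X,0)}\subseteq t^{-c}\mathbb C[\![t]\!]\,dt$.

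The step I expect to be the main obstacle is the existence of $\alpha\in\omega_{(X,0)}$ with $\val_t(\alpha)=-c$, because the naive candidate $t^{-c}\,dt$ need not be a Rosenlicht form (this already fails for suitable non-monomial branches). Instead I would exploit the perfect pairing
$$
\eta:\ \frac{\nu_*\mathcal O_{\overline X}}{\OX0}\times\frac{\omega_{(X,0)}}{\nu_*\Omega_{\overline X}}\longrightarrow\mathbb C,\qquad \eta(F,\alpha)=\mathrm{res}_t(F\alpha),
$$
from the proof of \propref{canideal}, where now $\nu_*\mathcal O_{\overline X}=\mathbb C[\![t]\!]$ and $\nu_*\Omega_{\overline X}=\mathbb C[\![t]\!]\,dt$. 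For $\alpha=g\,dt\in\omega_{(X,0)}$ one has $g_{-c}=(t^{c-1}g)_{-1}=\eta(\overline{t^{c-1}},\alpha)$, so on $\omega_{(X,0)}/\nu_*\Omega_{\overline X}$ the linear form ``coefficient of $t^{-c}$'' equals $\eta(\overline{t^{c-1}},-)$. As $t^{c-1}\notin\OX0$, the class $\overline{t^{c-1}}$ is nonzero in $\mathbb C[\![t]\!]/\OX0$, so by nondegeneracy of $\eta$ this linear form is not identically zero; hence some $\alpha=g\,dt\in\omega_{(X,0)}$ has $g_{-c}\ne0$, and together with $\val_t(g)\ge-c$ this forces $\val_t(\alpha)=-c$.

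Finally, for the monomial case: if $(X,0)$ is a monomial curve singularity then $\OX0$ is the closed $\mathbb C$-span of $\{t^{\gamma}:\gamma\in\Gamma_X\}$, so $g\,dt\in\omega^R_{(X,0)}$ if and only if $(t^{\gamma}g)_{-1}=g_{-1-\gamma}=0$ for all $\gamma\in\Gamma_X$, i.e.\ if and only if the support of $g$ is disjoint from $-\Gamma_X-1$. Hence for each $k\in\mathbb Z\setminus(-\Gamma_X-1)$---which lies in $-c+\mathbb N$ by the second step, so that $t^{k}\,dt$ is an honest meromorphic form---we have $t^{k}\,dt\in\omega^R_{(X,0)}$, whence $k\in\Gamma_{\omega_{(X,0)}}$. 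Combined with the inclusion of the second step this gives $\Gamma_{\omega_{(X,0)}}=\mathbb Z\setminus(-\Gamma_X-1)$.
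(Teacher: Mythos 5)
Your proof is correct, and for three of the four assertions it runs along the same lines as the paper's: the inclusion $\Gamma_{\omega_{(X,0)}}\subset\mathbb Z\setminus(-\Gamma_X-1)$ by pairing $\alpha$ against an $F\in\OX0$ whose leading exponent makes the product have valuation $-1$; the bound $\Gamma_{\omega_{(X,0)}}\subset -c+\mathbb N$ from $t^{-n-1}\in\OX0$ whenever $-n-1\ge c$; and the monomial case by testing $t^n\,dt$ against the monomials of $\OX0$. Where you genuinely diverge is the existence of a form of valuation exactly $-c$. The paper simply declares that $t^{-c}\,dt$ lies in $\omega_{(X,0)}$, which amounts to saying that no element of $\OX0$ has a nonzero coefficient of $t^{c-1}$; that holds for monomial branches but can fail in general (for $\OX0=\mathbb C[\![t^3,t^4+t^5]\!]$ one has $\Gamma_X=\{0,3,4,6,7,\dots\}$, so $c=6$, and $F=t^4+t^5\in\OX0$ has a nonzero $t^{5}$-coefficient, whence ${\rm res}_0(F\,t^{-6}\,dt)=1\neq 0$ and $t^{-6}\,dt\notin\omega_{(X,0)}$). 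Your substitute --- reading the coefficient $g_{-c}$ as $\eta(\overline{t^{c-1}},\alpha)$, noting $t^{c-1}\notin\OX0$ by maximality of the conductor, and invoking nondegeneracy of Rosenlicht's residue pairing --- is complete and correct, and it in fact repairs the one step of the paper's argument that does not hold verbatim for arbitrary branches. The only cost is that you must quote the perfectness of the pairing, which the paper already uses in the proof of \propref{canideal}, so nothing new is assumed; your separate treatment of the nonsingular case (where $c=0$ and $t^{c-1}$ would not make sense) is also the right precaution.
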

\begin{proof}
All  $\alpha \in \omega_{(X,0)}$ can be written $\alpha = t^n g(t) d t$ with $n\in \mathbb Z$ and
$g(t)\in \mathbb C [\![t ]\!]$ an invertible series.
Then for all $F=t^m g(t) \in \OX0$, $g(t)\in \mathbb C [\![t ]\!]$ an invertible series,
we get that
${\rm res}_{0}(\alpha F )=0.$
This implies in particular that $n+m\neq -1$, so $n\in  \mathbb Z \setminus (-\Gamma_X - 1)$.
Let us consider an element
$\alpha=t^n g(t) d t \in \omega_{(X,0)}$, $g(t)\in \mathbb C [\![t ]\!]$ an invertible series,
with  $n\le -c-1$.
Since $-n-1\ge c$ we get $t^{-n-1}\in \OX0$ and then
 a contradiction: $\val_t(\alpha t^{-n-1})=-1$.
The differential  $\alpha =t^{-c} dt$ belongs to $\omega_{(X,0)}$
and $\val_t(\alpha)=-c$.

Let assume now that $(X,0)$ is a monomial curve singularity and let  $n$ be an integer
of $\mathbb Z \setminus (-\Gamma_X - 1)$.
Consider the differential
$\alpha = t^{n} dt$, we only have to prove that $\alpha \in \omega_{(X,0)}$.
Let us consider $F=\sum_{i\ge 0} a_i t^i$ a power series with coefficients $a_i\mathbb C$, $i\ge 0$.
Since $(X,0)$ is monomial, we get that  $F\in \OX0$ if and only if for all $a_i\neq 0$ it holds $i\in \Gamma_X$.
If $\alpha \not\in \omega_{(X,0)}$ then there exists $F\in\OX0$ such that.
${\rm res}_{0}(\alpha F ) =a_{-n-1}\neq 0$.
This implies $-n-1\in\Gamma_X$, which is a  contradiction with the
hypothesis  $n \notin -\Gamma_X  -1$.
\end{proof}

\begin{example}
\label{exp-example}
Let us consider the monomial curve $X$ with parametrization $x_1=t^4, x_2=t^7, x_3=t^9$.
Then $e_0=4$, $c=11$, $\delta= 6$.
Then $\omega_X$ is the $\mathbb C$-vector space   spanned  by
$t^{-11}, t^{-7}, t^{-6}, t^{-4}, t^{-3}, t^{-2}, t^n, n\ge 0$, and the quotient
$\omega_X/ d \OX0$ admits as $\mathbb C$-vector space base the cosets defined by
$t^{-11}, t^{-7}, t^{-6}, t^{-4}, t^{-3},$ $  t^{-2}, 1, t, t^2, t^4, t^5, t^9$.
Notice that  $\mu(X)=12= 2 \delta(X)$.
From \propref{canideal}  we get that $x_1^a \omega_X $ is a canonical ideal
of $X$, $a\ge 101$.
On the other hand  $t^{15} \omega_X= (x_1,x_3)$ is a canonical ideal as well.
\end{example}

The next step
 is to find explicit canonical ideals from the resolution of
$\OX0$ when $X$ is a reduced  curve singularity of  $(\mathbb C^3, 0)$.
By the Hilbert-Burch theorem we know that there exists a minimal free resolution of $\OX0$ as an $\OC30$-module,  \cite{Bur68},
$$
0\longrightarrow \OC30^{v-1}
\stackrel{M}{\longrightarrow} \OC30^v
\longrightarrow \OC30
\longrightarrow \OX0
\longrightarrow 0
$$
where $M$ is a $v\times (v-1)$ matrix with entries belonging to the the maximal ideal $\OC30$ and $I_X$
is minimally generated by the maximal minors of $M$.
The canonical module of $\OX0$ is minimally generated by $v-1$  elements.
Following \cite{Boi99b}, \cite{IK99}, we consider a $(2 v-1) \times (2v -1)$ block-matrix
$$
M_A=\left(
\begin{array}{c|c}
A & M \\ \hline
 -M^\tau &0
\end{array}\right)
$$
where $A=(a_{i,j})_{i,j=1,\cdots, v}$ is a $v\times v$ skew-symmetric matrix.
Notice that $M_A$ is also a skew-symmetric matrix, and by the main result of \cite{BE77}
we have a complex
$$
0\longrightarrow \OC30
\stackrel{\Pf(M_A)^\tau}{\longrightarrow} \OC30^{ 2 v-1}
\stackrel{M_A}{\longrightarrow} \OC30^{2 v-1}
\stackrel{\Pf(M_A)}{\longrightarrow} \OC30
\longrightarrow {\mathcal O}_A =\OC30/I_A
\longrightarrow 0
$$
where $I_A$ is the ideal generated by the Pfaffians $\Pf(M_A)$ of $M_A$.
This complex is exact if and only if $I_Z$ is a height three ideal,   if
this is the case ${\mathcal O}_A$ is a Gorenstein ring, \cite{BE77}.

\begin{proposition}
\label{anti}
$(i)$
Let $A$ be a  $v\times v$ skew-symmetric matrix such that $I_A/ I_X$ is an $\max_X$-primary ideal of  $\OX0$
and  $I_A/ I_X \subset \max_X^{\pn(\OX0)+1} $.
Then $I_A/ I_X$  is a canonical ideal of $(X,0)$.

\noindent
$(ii)$
For all $t\ge \pn(\OX0)$ there exists a $v\times v$ skew-symmetric matrix $A$ with entries of order at least $t$ such that $I_A/ I_X$  is a canonical ideal of $\OX0$.
\end{proposition}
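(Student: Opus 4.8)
The plan is to obtain $(i)$ as a direct application of \corref{existence}, and to deduce $(ii)$ from $(i)$ by a scaling argument; the one substantial ingredient will be the existence of a single skew-symmetric matrix whose Pfaffian ideal is $\max_X$-primary, which I would extract from the construction of \cite{Boi99b} and \cite{IK99} recalled just above.

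For $(i)$, put $J=I_A/I_X$. Since $J$ is $\max_X$-primary, $\OX0/J=\OC30/I_A=\mathcal O_A$ is Artinian, so $I_A$ is a height-three ideal, and hence $\OX0/J=\mathcal O_A$ is Gorenstein by \cite{BE77} (as recalled before the statement). Next I would choose a superficial element $x\in\max_X$; it exists because the residue field $\mathbb C$ is infinite, and it is a non-zero divisor since $\OX0$ is one-dimensional Cohen--Macaulay. The elementary input is the standard identity $\length_{\OX0}(\max_X^{n+1}/x\max_X^{n})=e_0(\OX0)-\HF^0_{\OX0}(n)$ for such an $x$; applying it with $n=r:=\pn(\OX0)$ gives $\max_X^{r+1}=x\max_X^{r}$. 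Since $J\subset\max_X^{\pn(\OX0)+1}=\max_X^{r+1}$ and $\OX0/J$ is Gorenstein, \corref{existence} applies and shows that $J$ is a canonical ideal of $(X,0)$.

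For $(ii)$, the crux is to exhibit one $v\times v$ skew-symmetric matrix $A_1$ over $\OC30$ with $I_{A_1}$ of height three. Here I would use that $\OX0$ is reduced, so that its total ring of fractions is a finite product of fields, hence Gorenstein, and therefore $\OX0$ possesses a canonical ideal $J_0$ by \cite{HK71}; its preimage $\widetilde{J_0}\subset\OC30$ is then a codimension-three Gorenstein ideal. By the Buchsbaum--Eisenbud structure theorem the minimal free resolution of $\OC30/\widetilde{J_0}$ is self-dual with an odd-size skew-symmetric presentation matrix, and comparing it with the Hilbert--Burch resolution of $\OX0$ (a comparison map exists because $I_X\subset\widetilde{J_0}$), together with the fact that the canonical module of $\OX0$ is minimally generated by $v-1$ elements, one rewrites this presentation matrix in the block form $M_{A_1}$ of the excerpt, so that $I_{A_1}=\widetilde{J_0}$ and its image $I_{A_1}/I_X$ in $\OX0$ is $\max_X$-primary. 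This is exactly the content of \cite{Boi99b} and \cite{IK99} adapted to the local curve setting, and it is the part I expect to require the most care, the symmetrization of the comparison map being the delicate point.

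Granting $A_1$, fix $t\ge\pn(\OX0)$ and set $s=\max\{t,\pn(\OX0)+1\}\ge t$. Choose a linear form $\ell$ in the maximal ideal of $\OC30$ lying in no minimal prime of $I_X$ (possible since each such prime has height two), so that its image $\bar\ell\in\OX0$ is a non-zero divisor, and put $A:=\ell^{s}A_1$, a $v\times v$ skew-symmetric matrix all of whose entries have order at least $s\ge t$. Expanding the maximal Pfaffians of $M_A$: those obtained by deleting one of the first $v$ rows and columns equal, up to sign, the maximal minors of $M$ (hence are independent of $A$ and generate $I_X$), while in each of the remaining $v-1$ maximal Pfaffians every nonzero monomial uses exactly one entry of the top-left block; consequently passing from $A_1$ to $\ell^{s}A_1$ multiplies those $v-1$ Pfaffians by $\ell^{s}$ and leaves the others unchanged, so $I_A/I_X=\bar\ell^{s}(I_{A_1}/I_X)$. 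This ideal is $\max_X$-primary, being a non-zero-divisor multiple of the $\max_X$-primary ideal $I_{A_1}/I_X$, and it is contained in $\max_X^{s}\subset\max_X^{\pn(\OX0)+1}$; hence by part $(i)$ it is a canonical ideal of $\OX0$, which proves $(ii)$.
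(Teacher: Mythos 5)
Your part $(i)$ is correct and is essentially the paper's own argument with the details filled in: $\max_X$-primariness of $I_A/I_X$ forces $I_A$ to have height three, so $\OC30/I_A=\OX0/(I_A/I_X)$ is Gorenstein by \cite{BE77}, and the standard identity $\length(\max_X^{n+1}/x\max_X^{n})=e_0(X)-\HF^0_X(n)$ for a degree-one superficial non-zerodivisor $x$ gives $\max_X^{\pn(\OX0)+1}=x\max_X^{\pn(\OX0)}$, so \corref{existence} applies.

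Part $(ii)$, however, has a genuine gap at its crux. You correctly reduce everything to producing one skew-symmetric matrix $A_1$ with $I_{A_1}/I_X$ $\max_X$-primary (the scaling step $I_{\ell^{s}A_1}/I_X=\bar\ell^{\,s}(I_{A_1}/I_X)$ is fine, since each Pfaffian $F_k$, $v+1\le k\le 2v-1$, is linear in the entries of $A$). But your construction of $A_1$ --- pull back an abstract canonical ideal $J_0$ to a height-three Gorenstein ideal $\widetilde{J_0}\subset\OC30$, take its Buchsbaum--Eisenbud skew-symmetric presentation, and ``rewrite it in the block form $M_{A_1}$'' by comparison with the Hilbert--Burch resolution --- is exactly the point that needs proof, and you leave it as an acknowledged delicate step. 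It is not a routine consequence of \cite{Boi99b} or \cite{IK99} (which work in the graded projective setting), and it silently requires, among other things, that $\widetilde{J_0}$ be minimally generated by exactly $2v-1$ elements and that a comparison map between resolutions (defined only up to homotopy) can be symmetrized so that the off-diagonal blocks are $M$ and $-M^{\tau}$ for the given Hilbert--Burch matrix $M$. The paper avoids all of this with a direct construction: since $(X,0)$ is an isolated singularity its Jacobian ideal is $\max_X$-primary and is contained in the ideal generated by the $F_i$ together with the cofactors $\det(M_{i,j,k})$; prime avoidance plus the Cohen--Macaulayness of $\OX0$ then produce indices $(i,j,k)$ with $\det(M_{i,j,k})$ a non-zerodivisor in $\OX0$, and one takes $A$ with the single nonzero pair $a_{i,j}=\pm x^{t}$ for a non-zerodivisor $x\in\max_X\setminus\max_X^{2}$, so that $F_k=\pm x^{t}\det(M_{i,j,k})$ is a non-zerodivisor and $I_A/I_X$ is $\max_X$-primary and contained in $\max_X^{t+1}$. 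You should replace your construction of $A_1$ by an argument of this kind, or actually carry out the symmetrization you only sketch.
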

\begin{proof}
$(i)$
This is a consequence of \cite{BE77} and \corref{existence}.

\noindent
$(ii)$
The ideal $I_A$ is generated by the Pfaffians of $M_A$, and these elements take the following form, see \cite{IK99},
\begin{enumerate}
\item $F_i= (-1)^{i + \frac{(v-1)(v-2)}{2} } \det(M_i)$, $i=1,\dots, v$, where $M_i$ is the matrix removing the $i$-th row of $M$,
\item
$F_k=\sum_{1\le i < j \le v} (-1)^{i+j+k + \frac{(v-1)(v-4)}{2} } a_{i,j} \det(M_{i,j,k})$,
$v+1\le k\le 2v-1$,
$M_{i,j,k}$ is the matrix removing the $i$, $j$ rows of $M$ and the $k$ column of $M$.
\end{enumerate}
Notice that $F_1,\cdots, F_v$ is a system of generators of $I_X$.
Hence $I_A/ I_X$ is generated by the cosets of $F_k$ in $\OX0$, $v+1\le k\le 2v-1$.

Let $\pi:\OC30\longrightarrow \OX0$ be the natural projection.
Let $J_X$ be the ideal generated by the $2\times2$ minors of the $v\times 3$ matrix $Jac_X$
whose $i$-th row is the gradient vector
$\nabla F_i=(\partial F_1/\partial x_1, \partial F_1/\partial x_3, \partial F_1/\partial x_3)$,
$i=1,\dots, v$.
The image $\pi(J_X)$ is the Jacobian ideal of $X$.
Since the $(X,0)$ is an isolated curve singularity we have that $\pi(J_X)$ is $\max_X$-primary, so
 $\pi^{-1}\pi(J_X)=I_X+ J_X$ is $(x_1,x_2,x_3)$-primary.

 On the other hand it is easy to prove that
 $$
 I_X+ J_X\subset K=(F_i;  1\le i\le  v; \det(M_{i,j,k}); 1\le i< j\le  v;  v+1\le k\le 2v-1),
 $$
 so $K$ is an $(x_1,x_2,x_3)$-primary ideal.

  Assume that  $\det(M_{i,j,k})$ is zero divisor of $\OX0$ for all $1\le i\le  v; 1\le i< j\le  v;  v+1\le k\le 2v-1)$.
 Let $\frak p_1,\cdots, \frak p_m$ be the set of minimal primes of $\OX0$.
  Then $\frak p_i \neq \max_X$, $i=1,\cdots, m$, and
 $ K/I_X \subset \frak p_1 \cup \cdots \cup \frak p_m.$
 Since $K/I_X$ an $\max$-primary ideal of $\OX0$ there is an integer $w$ such that
 $$
 (x_1,x_2,x_3)^w\subset \frac{K}{I_X} \subset \frak p_1 \cup \cdots \cup \frak p_m.
 $$
 Hence all element of $(x_1,x_2,x_3)^w$ is a zero divisor  of $\OX0$, but this is not
 possible because $\OX0$ is a Cohen-Macaulay ring.
 We have proved that there exist integers
 $1\le i\le  v; 1\le i< j\le  v;  v+1\le k\le 2v-1)$
 such that $\det(M_{i,j,k})$ is  a non-zero divisor in $\OX0$.

 Let $x\in\max\setminus \max^2$ be a non-zerodivisor of $\OX0$.
 Let us consider the skew-symmetric matrix such that
 $$
 \left\{
   \begin{array}{ll}
     a_{i,j}= x^{\pn(\OX0)},  \\
     a_{\alpha,\beta}=0 & \quad  \alpha < \beta, (\alpha,\beta) \neq (i,j).
   \end{array}
 \right.
 $$
Then $F_k= \pm x^{\pn(\OX0)} \det(M_{i,j,k})$ is a non-zero divisor of $\OX0$.
Hence $I_A/I_X$ is an $\max$-primary ideal of $\OX0$ contained in $\max^{\pn(\OX0)+1}$.
By $(i)$ we get that  $I_A/I_X$ is a canonical ideal of $\OX0$.
 \end{proof}

\begin{example}
Let us consider a monomial curve singularity $X$ with parametrization $(t^{n_1},t^{n_2},t^{n_3})$
such that $n_1< n_2< n_3$ and $\gcd(n_1,n_2,n_3)=1$.
Then $e_0(X)=n_1$ and
$\OX0=\mathbb C [\![t^{n_1},t^{n_2},t^{n_3} ]\!] \subset \overline{\OX0}=\mathbb C [\![t ]\!]$.
The  ideal $I_X$ is generated by
$F_1= x_1^{r_{3,1}} x_2^{r_{3,2}}- x_3^{c_3}$, $F_2= x_2^{r_{1,2}} x_3^{r_{1,3}}- x_1^{c_1}$ and $F_3= x_1^{r_{2,1}} x_3^{r_{2,3}}- x_2^{c_2}$,
where $r_{i,j}\ge 0$ and $c_i>0$ is the least integer such that
$c_i n_i = \sum_{i\neq j} r_{i,j} n_j$, $i=1,2,3$, \cite{Her70}.
We assume that $X$ is not a complete intersection, so $r_{i,j}>0$.
Then we have that $c_1= r_{2,1} + r_{3,1}$,
$c_2= r_{1,2} + r_{3,2}$,
$c_3= r_{1,3} + r_{2,3}$,
and $F_1, F_2, F_3$ are the maximal minors of a matrix
$$
M=\left(
    \begin{array}{cc}
      x_1^{r_{2,1}} & x_2^{r_{1,2}}  \\ \\
      x_2^{r_{3,2}} & x_3^{r_{2,3}}  \\ \\
      x_3^{r_{1,3}} & x_1^{r_{3,1}}
    \end{array}
  \right)
$$
\cite{RV80}.
Then the matrix $M_A$ takes the from
$$
M_A=
C=\left(
\begin{array}{c|c}
A &M \\ \hline
-M^{\tau} &0
\end{array}\right)
=
\left(
\begin{array}{c|c}
\begin{array}{ccc}
     0& a_{1,2} & -a_{1,3} \\ \\
    -a_{1,2} & 0& a_{2,3} \\ \\
    a_{1,3} & -a_{3,2}&0\\ \\
\end{array}
 &
\begin{array}{cc}
       x_1^{r_{2,1}} & x_2^{r_{1,2}}  \\  \\
      x_2^{r_{3,2}} & x_3^{r_{2,3}}  \\ \\
      x_3^{r_{1,3}} & x_1^{r_{3,1}} \\ \\
\end{array}
\\ \hline \\
\begin{array}{ccc}
      -x_1^{r_{2,1}} & -x_2^{r_{3,2}} & - x_3^{r_{1,3}}\\ \\
      -x_2^{r_{1,2}} & -x_3^{r_{2,3}} & -x_1^{r_{3,1}}\\ \\
\end{array}
&
\begin{array}{cc}
      0 & 0  \\ \\
      0 & 0 \\  \\
\end{array}
\end{array}\right)
$$
with
$a_{i,j}\in \max^{\pn(\OX0)}$.
Then $I_A/I_X$ is a canonical ideal generated by the cosets in $\OX0$ of
$ a_{2,3} m_{1,i}+ a_{1,3} m_{2,i} + a_{1,2} m_{3,i}$, $i=1,2$, if one of this two elements
is a non-zerodivisor of $\OX0$, see \propref{anti} $(ii)$.
For instance, we can take $a_{2,3}=x_1^{\pn(X)}$ and $a_{i,j}=0$ for $(i,j)\neq (2,3)$, so $(x_1^{r_{2,1}+\pn(X)}, x_1^{\pn(X)} x_2^{r_{1,2}})$ is a canonical ideal of $X$.
\end{example}


\section{Canonical ideals and the classification of curve singularities}

In this section  we translate the classification of curve singularities to the classification
of local Artin Gorenstein rings by means of the quotients by canonical ideals.

First we have to define what generic means in our context.
We denote by $S_t$ the $\mathbb C$-vector space of forms of degree $t$ of $\series{n}$.

\begin{proposition}
\label{generic}
\noindent
$(i)$  For all $t\ge 2 \mu(X)+1 $ there exists a non-empty Zariski open set $U_t(X)$ of $S_t$ such that:
\begin{enumerate}
\item[(i.1)]
for all $z \in U_t(X)$,
$\overline{z} \in \OX0$ is a degree $t$ superficial element and non-zerodivisor,

\item[(i.2)]  for all $z_1, z_2 \in U_t(X)$
$$
\HF^1_{\OX0/\overline{z_1} \omega_{(X,0)}}=
\HF^1_{\OX0/\overline{z_2} \omega_{(X,0)}}
$$

\item[(i.3)]
 for every degree $t$ superficial element   $y\in \OX0$ and for every
 $z\in U_t(X)$ it holds that for all   $n\ge 0$,
$$
\HF^1_{\OX0/\overline{z_1} \omega_{(X,0)}}(n)\le
\HF^1_{\OX0/y \omega_{(X,0)}}(n).
$$
\end{enumerate}

\noindent
$(ii)$
For  all
$z \in U_t(X)$ and  $t\ge 4 \mu(X)+1$, the socle degree and the length  of $\OX0/\overline{z} \omega_{(X,0)}$
are constant and  analytic invariants of $(X,0)$  and satisfy
$$
s(\OX0/ \overline{z}\omega_{(X,0)})\le e_0(X)(t-2 \mu(X)-1) + 2 \delta(X) +e_1(X)+ 2(1-r),
$$
$$
\length(\OX0/ \overline{z}\omega_{(X,0)})=e_0(X) t- 2 \delta(X).
$$
\end{proposition}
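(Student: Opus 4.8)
The plan is to take for $U_t(X)$ the set of $z\in S_t$ whose image $\overline z\in\OX0$ is a degree-$t$ superficial element and for which the Hilbert--Samuel function of $\OX0/\overline z\omega_{(X,0)}$ is as small as possible, and to verify that this set is a non-empty Zariski open. First I would recall that, $\mathbb C$ being infinite, an element of $\max_X^t$ is a degree-$t$ superficial element for the $\max_X$-adic filtration exactly when its leading form in the degree-$t$ piece of the associated graded ring of $(\OX0,\max_X)$ avoids the finitely many associated primes of that ring other than its irrelevant maximal ideal; since $n$ is the embedding dimension the reduction map $S_t\twoheadrightarrow\max_X^t/\max_X^{t+1}$ is surjective, so the preimage $U_t^{(1)}$ of the superficial locus is a non-empty Zariski open subset of $S_t$, and a positive-degree superficial element lies in no minimal prime, hence is a non-zerodivisor because $\OX0$ is one-dimensional Cohen--Macaulay; this is (i.1). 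Moreover, for $z\in U_t^{(1)}$ Proposition~\ref{canideal}(i) applies: $\overline z\omega_{(X,0)}$ is an ideal of $\OX0$ and $\OX0/\overline z\omega_{(X,0)}$ is Gorenstein of length $\ell:=t\,e_0(X)-2\delta(X)$, which does not depend on $z$, so in particular $\max_X^{\ell}\subseteq\overline z\omega_{(X,0)}$ for all such $z$.

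The key step is a semicontinuity argument. Put $V=\OX0/\max_X^{\ell}$, a $\mathbb C$-vector space of fixed dimension $D=\HF^1_X(\ell-1)$, and for $z\in U_t^{(1)}$ let $L_z=\overline z\omega_{(X,0)}/\max_X^{\ell}\subseteq V$, a subspace of fixed dimension $D-\ell$. If $\xi_1,\dots,\xi_m$ generate $\omega_{(X,0)}$ over $\OX0$, then $\overline z\omega_{(X,0)}=\sum_j\OX0\,\overline z\xi_j$ with $\overline z\xi_j\in\OX0$, and $z\mapsto\overline z\xi_j$ is $\mathbb C$-linear; choosing a basis of $V$ one sees that $L_z$ is the column span of a matrix $B(z)$ whose entries are linear in $z$. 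Hence for each $n$,
\[
h_n(z):=\HF^1_{\OX0/\overline z\omega_{(X,0)}}(n)=D-\operatorname{rank}\bigl[\,B(z)\mid C_n\,\bigr],
\]
with $C_n$ a fixed matrix whose columns span $\max_X^{n+1}/\max_X^{\ell}$, so $h_n$ is upper semicontinuous in $z$, the loci $\{h_n\ge k\}$ being cut out by vanishing of minors.

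Since $h_n(z)$ is non-decreasing in $n$ and equals $\ell$ for every $n\ge\ell-1$ (the socle degree of an Artinian ring of length $\ell$ is $<\ell$), only the finitely many $n\le\ell-2$ matter; each $h_n$ attains its minimum on a non-empty Zariski open $W_n\subseteq U_t^{(1)}$ (recall $S_t$ is irreducible), and I would set $U_t(X)=\bigcap_nW_n$, a non-empty Zariski open set all of whose points give the same (minimal) Hilbert--Samuel function, which is (i.2). For (i.3) with $y=\overline{z_2}$, $z_2\in U_t^{(1)}$, this minimality is precisely the assertion; for an arbitrary degree-$t$ superficial $y\in\max_X^t$ one uses in addition that such a $y$ is filter-regular on the maximal Cohen--Macaulay module $\omega_{(X,0)}$, so that the filtration induced on $\overline z\omega_{(X,0)}$, hence $h_{\bullet}$, is governed by the leading form of $\overline z$ alone; since $S_t$ surjects onto the degree-$t$ part of the associated graded ring, the minimum of $h_{\bullet}$ over superficial forms in $S_t$ then equals its minimum over all degree-$t$ superficial elements of $\OX0$, giving (i.3).

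Finally, (ii) is quick: the length $\length(\OX0/\overline z\omega_{(X,0)})=e_0(X)t-2\delta(X)$ is Proposition~\ref{canideal}(i), so it is constant and, depending only on the intrinsic invariants $e_0(X),\delta(X)$ and on $t$, is an analytic invariant; the socle degree is determined by the Hilbert--Samuel function, which by (i.2) is the same for every $z\in U_t(X)$, so it too is constant, and the displayed bound is Proposition~\ref{canideal}(ii). For analytic invariance of the socle degree one notes that the construction is intrinsic to $\OX0$: an analytic isomorphism $(X,0)\cong(X',0)$ induces a ring isomorphism carrying $\max_X$, $\omega_{(X,0)}$ and the degree-$t$ superficial elements to their primed analogues, hence $\OX0/\overline z\omega_{(X,0)}$ to an Artin Gorenstein quotient of $\mathcal{O}_{(X',0)}$ of the same length and socle degree. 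The step I expect to be the main obstacle is precisely the comparison in (i.3) between generic degree-$t$ forms and arbitrary degree-$t$ superficial elements, i.e.\ the verification that only the leading form of $\overline z$ enters the Hilbert--Samuel function of $\OX0/\overline z\omega_{(X,0)}$; everything else is either standard or contained in Proposition~\ref{canideal}.
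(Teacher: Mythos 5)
Your proof follows essentially the same route as the paper's: openness of the superficial locus in $S_t$ (the paper cites Sally, you re-derive it via the associated graded ring) for (i.1), upper semicontinuity of $z\mapsto \HF^1_{\OX0/\overline z\omega_{(X,0)}}(n)$ on the irreducible space $S_t$ together with intersecting the finitely many relevant open sets for (i.2)--(i.3), and Proposition~\ref{canideal} plus equivariance under analytic isomorphisms for (ii). The one place you go beyond the paper is the comparison in (i.3) with arbitrary degree-$t$ superficial elements of $\OX0$ rather than just images of forms in $S_t$ --- the paper simply states that (i.2) and (i.3) follow from the definition of $U_t(X)$ --- so your (admittedly sketchy) reduction to the leading form is an attempt to fill a detail the paper leaves implicit rather than a divergence from its method.
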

\begin{proof}
$(i)$
From \cite[Proposition 3.2]{Sal78} there exists a non-empty Zariski open set $W_0\subset S_t$ such that  $ \overline{z} \in \OX0$ is a degree $t$ superficial element and non-zerodivisor.
From  \propref{canideal} $(i)$ we know that for all $z \in W_0$ we have
$\length(\OX0 / \overline{z}\omega_{(X,0)} )= \ell$  with $\ell=t  e_0(X) -  2 \delta(X)$,
so $\HF^1_{\OX0 /\overline{z}\omega_{(X,0)} }(n)=\ell$  for all $n\ge \ell$.
Hence we only have to consider $n\le \ell$.
Let us consider the upper-semicontinous function
$$
\begin{array}{cccc}
  \sigma_n :& S_t & \longrightarrow & {\mathbb N}\\
   & z  & \mapsto & \HF^1_{\OX0/\overline{z}\omega_{(X,0)}}(n).
\end{array}
$$
For each $n=1,\cdots, \ell$,  let $W_n$ be a non-empty Zariski open set $W_n\subset W_0$ such that
$\sigma_n(z)=\min \{\sigma_n\}$ for all $z \in W_n$.
We set $U_t(X)=W_0\cap \cdots \cap W_{\ell}$.
From the definition of $U_t(X)$ it is easy to get $(2)$ and $(3)$.

\noindent
$(ii)$
From $(i.2)$ we deduce that
the socle degree and the length  of $\OX0/\overline{z}\omega_{(X,0)}$ are constants for
all $t\ge 2 \mu (X) +1  $ and $ z \in U_t(X)$.
The upper bounds come from \propref{canideal}.
Let $\varphi: (X_1,0) \rightarrow (X_2,0)$ be an analyic isomorphism between  two reduced curve singularities.
Let $\varphi_t : \max_{X_1}^t/\max_{X_1}^{t+1}\rightarrow \max_{X_2}^t/\max_{X_2}^{t+1}$ be the $\mathbb C$-vector space  isomorphism induced by $\varphi$.
Since $\varphi_t(U_t(X_1))\cap U_t(X_2) \neq\emptyset $ we get the last part of the claim.
\end{proof}

 We write $\ell(X)= e_0(X)(4 \mu(X)+1)- 2 \delta(X)$.
 Notice that this is the length
 of the quotients $\OX0/\overline{z}\omega_{(X,0)}$ for $\overline{z} \in U_{4 \mu(X)+1}(X)$.

\begin{definition}
We denote by $\sigma(X)$ the least socle degree of the quotients $\OX0 /I$ where
$I$ range the set of canonical ideals $I\subset \OX0$ with
$I\subset \max^{2 \mu(X)+1}$ and  $\length(\OX0 /I)=\ell(X)$.
A canonical ideal $I$ is called {\bf deep} if
$I\subset \max^{2 \mu(X)+1}$,
$\length(\OX0 /I)=\ell(X)$ and
$s(\OX0 /I)=\sigma(X)$.
Notice that from \propref{generic} deep canonical ideals exist and that $\sigma(X)$ is an analytic invariant.
\end{definition}

\begin{remark}
If $(X,0)$ is non-singular then we can take $t=1$ in the last identity of \propref{generic};
in fact we have
$s(\OX0/ x \OX0)=s(\res)= 0.$
If $(X,0)$ is Gorenstein then  we can take as canonical ideal the whole ring $I=\OX0$.
Then we have $x\in \OX0$ degree one superficial element, that
$$
s\left(\frac{\OX0}{x^t \OX0}\right)=
s\left(\frac{\OX0}{x^{e_0(X)-1}\OX0}\right) + t-e_0(X)+1,
$$
and
$\length(\OX0/x^t \OX0)=e_0(X) t,$
for all $t\ge e_0(X)-1$.
\end{remark}

Given a non-negative integer $t$ we denote by $\hilb_{({\mathbb C}^n,0)}^t$ the Hilbert scheme of length $t$
subschemes $Z$ of $({\mathbb C}^n,0)$.
We denote by  $[Z]$ is the closed point of  $\hilb_{({\mathbb C}^n,0)}^t$ defined by $Z$.
From the universal property of $\hilb_{({\mathbb C}^n,0)}^t$ we deduce that any
analytic isomorphism $\phi: ({\mathbb C}^n, 0) \longrightarrow  ({\mathbb C}^n, 0)$ induces
a $\mathbb C$-scheme isomorphism
$\tilde \phi: \hilb_{({\mathbb C}^n,0)}^t\longrightarrow \hilb_{({\mathbb C}^n,0)}^t$
such that $\tilde \phi([Z])=[\phi(Z)]$.
Given a canonical ideal $I$ of a reduced curve singularity $(X,0)$ we denote by
$(X,0)_I$ the zero-dimensional scheme $\spec(\OX0 /  I)$.
We know that $(X,0)_I$ is an Artin  Gorenstein scheme, \cite[Proposition 3.3.18]{BH97}.
It is well-known that two  canonical ideals are isomorphic, \cite[Theorem 3.3.4]{BH97}.
In the one-dimensional case one can prove more:  if $I_1$, $I_2$ are canonical ideal
there exist non-zero divisors $y_1, y_2 \in\OX0$ such that
$y_1 I_1 =y_2 I_2$, \cite[Theorem 15.8]{Mat77}.
Notice that from the proof of this result  we get that for all  $z\in U_t(X)$ there exists  integer $\alpha$
and a regular  element $y$
such that $z^{\alpha} I= y z \omega_{(X,0)}$.
The ideals $I_1$, $I_2$ are isomorphic as $\OX0$-modules,  but, in general, there are no  analytic isomorphic.
Since $K_t=x^{ t}\omega_{(X,0)}$ is a canonical ideal for all $t\ge 1$, $x$ a degree-one superficial element,  the Hilbert function
of $\OX0/K_t$ varies with $t$.

\begin{proposition}
\label{canlocus}
There exists a non-empty subscheme $\Can(X,0)$ of the Hil\-bert sche\-me  $\hilb_{({\mathbb C}^n,0)}^{\ell(X)}$ whose closed points correspond to  zero-dimensional Gorenstein schemes $(X,0)_I \subset ({\mathbb C}^n,0)$ of length $\ell(X)$ and
such that $I\subset \max_X^{2 \mu(X)+1}$ is a  deep canonical ideal.
\end{proposition}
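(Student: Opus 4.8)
The plan is to exhibit $\Can(X,0)$ as a locally closed subscheme of $\hilb_{({\mathbb C}^n,0)}^{\ell(X)}$ defined directly by the conditions that characterise a deep canonical ideal. The point that makes this work is that, in the range we care about, the property ``canonical'' is automatic. Indeed, by \propref{basic}$(ii)$ one has $e_0(X)-1\le e_1(X)\le\delta(X)\le\mu(X)$, so $2\mu(X)+1\ge e_0(X)$ and hence $\max_X^{2\mu(X)+1}\subset\max_X^{e_0(X)}$; then \corref{existence} together with \remref{upperpost} shows that every $\max_X$-primary ideal $I\subset\max_X^{2\mu(X)+1}$ with $\OX0/I$ Gorenstein is already a canonical ideal of $(X,0)$. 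Thus the ideals over which $\sigma(X)$ is minimised form precisely the set
$$\mathcal D(X)=\{\,I\subset\max_X^{2\mu(X)+1}\ :\ \OX0/I\ \text{is an Artin Gorenstein ring of length}\ \ell(X)\,\},$$
this set is non-empty --- it contains any deep canonical ideal, whose existence was recorded in the Definition, and concretely it contains $\overline z\,\omega_{(X,0)}$ for $z\in U_{4\mu(X)+1}(X)$ by \propref{canideal}$(i)$ and \propref{generic}$(ii)$ (see below for $\overline z\,\omega_{(X,0)}\subset\max_X^{2\mu(X)+1}$) --- and $\sigma(X)=\min\{\,s(\OX0/I):I\in\mathcal D(X)\,\}$ is attained because the socle degrees in question all lie in the finite set $\{0,1,\dots,\ell(X)-1\}$.

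Next I would put
$$\Can(X,0)=\{\,[Z]\in\hilb_{({\mathbb C}^n,0)}^{\ell(X)}\ :\ Z\subset(X,0),\ I_Z/I_X\subset\max_X^{2\mu(X)+1},\ \OX0/(I_Z/I_X)\ \text{Gorenstein},\ s(\OX0/(I_Z/I_X))\le\sigma(X)\,\}$$
with its reduced induced scheme structure. The incidence condition $Z\subset(X,0)$ (vanishing on the universal family of the fixed generators of $I_X$) and the condition $I_Z/I_X\subset\max_X^{2\mu(X)+1}$, i.e.\ containment of $I_Z$ in a fixed finite-colength ideal of $\OCn0$, are closed; ``Gorenstein'', that is Cohen--Macaulay type $\le1$, is open because $\dim_{\mathbb C}\soc$ is upper semicontinuous along a flat family of zero-dimensional schemes; and $s(\OX0/(I_Z/I_X))\le\sigma(X)$ is closed because the socle degree is lower semicontinuous along such a family (it is maximal at the curvilinear points). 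Hence $\Can(X,0)$ is a locally closed subscheme.

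It then remains to read off the closed points. If $[Z]\in\Can(X,0)$ then $I:=I_Z/I_X$ is $\max_X$-primary with $I\subset\max_X^{2\mu(X)+1}$ and $\OX0/I$ Gorenstein of length $\ell(X)$, so $I\in\mathcal D(X)$ by the first paragraph; the inequality $s(\OX0/I)\le\sigma(X)$ then forces equality by minimality, so $I$ is a deep canonical ideal and $Z=(X,0)_I$ is the asserted Artin Gorenstein scheme of length $\ell(X)$. Conversely any deep canonical ideal $I_0$ lies in $\mathcal D(X)$ with $s(\OX0/I_0)=\sigma(X)$, so $[(X,0)_{I_0}]$ is a closed point of $\Can(X,0)$; since deep canonical ideals exist, $\Can(X,0)\ne\emptyset$.

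The inputs I would treat as routine are the standard (semi)continuity statements on the local Hilbert scheme used above (incidence with a fixed subscheme and containment of the universal ideal in a fixed finite-colength ideal are closed; Cohen--Macaulay type and socle degree vary semicontinuously along the universal flat family of Artinian quotients). The one computation that actually uses the threshold $t\ge4\mu(X)+1$ is the inclusion $\overline z\,\omega_{(X,0)}\subset\max_X^{2\mu(X)+1}$ for $z\in U_{4\mu(X)+1}(X)$, which follows from $\max_X^{\mu(X)+c(X)}\omega_{(X,0)}\subset\OX0$ (established inside the proof of \propref{canideal}) together with the estimates of \propref{basic}; this is only needed to produce explicit members of $\mathcal D(X)$, since the abstract existence of deep canonical ideals already guarantees $\mathcal D(X)\ne\emptyset$. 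The only genuinely conceptual step, rather than an obstacle, is the reduction in the first paragraph: once being canonical is seen to follow from being Gorenstein together with the depth hypothesis $I\subset\max_X^{2\mu(X)+1}$, the locus above visibly has exactly the advertised closed points, and both its scheme structure and its non-emptiness are then formal. One can also phrase part of this in family form: the ideals $\overline z\,\omega_{(X,0)}$, $z\in U_{4\mu(X)+1}(X)$, vary in a flat family (the colength being the constant $\ell(X)$ by \propref{generic}$(ii)$), hence define a morphism $U_{4\mu(X)+1}(X)\to\hilb_{({\mathbb C}^n,0)}^{\ell(X)}$ landing in the locus $\{Z\subset(X,0)\}\cap\{I_Z/I_X\subset\max_X^{2\mu(X)+1}\}\cap\{\OX0/(I_Z/I_X)\ \text{Gorenstein}\}$, of which $\Can(X,0)$ is the closed sublocus of least socle degree.
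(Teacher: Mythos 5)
Your proof is correct and lands on the same locus, but the step that handles the word ``canonical'' is genuinely different from the paper's. The paper's proof cuts out $\Can(X,0)$ by combining (a) semicontinuity of $\dim_{\res}$ to impose $I\subset\max_X^{2\mu(X)+1}$ with (b) the module-theoretic characterization of \cite[Proposition 3.3.13]{BH97}: since $I$ is a faithful maximal Cohen--Macaulay $\OX0$-module, it is canonical iff it has Cohen--Macaulay type one, and the type is a positive upper semicontinuous function, so this is an open condition; the deepness (socle-degree) condition is left implicit. You instead make ``canonical'' automatic: from $e_0(X)\le e_1(X)+1\le\mu(X)+1\le 2\mu(X)+1$ (\propref{basic}) one gets $\max_X^{2\mu(X)+1}\subset\max_X^{e_0(X)}$, so by \corref{existence} and \remref{upperpost} every $\max_X$-primary $I$ in this range with $\OX0/I$ Gorenstein is already canonical; the locus is then cut out by openness of the Gorenstein (socle dimension $\le 1$) condition on the quotient, plus the closed condition $s(\OX0/I)\le\sigma(X)$. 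Your route buys self-containedness (only Section 2 of the paper is used, no appeal to the type of $I$ as a module) and an explicit treatment of the deepness condition that the paper glosses over; the paper's route buys a characterization of canonical ideals that does not depend on the containment hypothesis. Two small caveats on your write-up: the parenthetical ``maximal at the curvilinear points'' is not the right justification for closedness of $\{s\le\sigma(X)\}$ --- the clean statement is that $\length(\OX0/(I+\max_X^{\sigma(X)+1}))$ is upper semicontinuous along the universal flat family, so $\{\max_X^{\sigma(X)+1}(\OX0/I)=0\}$ is closed; and the verification that $\overline z\,\omega_{(X,0)}\subset\max_X^{2\mu(X)+1}$ via $\max_X^{\mu(X)+c(X)}\omega_{(X,0)}\subset\OX0$ yields only $\max_X^{3\mu(X)+1-c(X)}$, which is $\subset\max_X^{2\mu(X)+1}$ exactly when $c(X)\le\mu(X)$ (automatic for branches, where $c(X)\le 2\delta(X)=\mu(X)$, but requiring a word for $r\ge 2$) --- though this non-emptiness input is relied upon equally tacitly by the paper itself.
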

\begin{proof}
Notice that by \propref{canideal} there exist canonical ideals  $I\subset \max_X^{2 \mu(X)+1}$  of colength  $\ell(X)$.
Hence the set of closed points of $\hilb_{({\mathbb C}^n,0)}^{\ell(X)}$ corresponding to
such a canonical ideal is non-empty.
On the other hand, by standard arguments on the semi-continuity of the dimension of $\res$-vector spaces, there exists a sub-scheme of $\hilb_{({\mathbb C}^n,0)}^{\ell(X)}$ such that its closed points correspond to the quotients  $\OX0/I$ with $I\subset \max_X^{2 \mu(X)+1}$.
Since $\OX0$ is a one-dimensional Cohen-Macaulay ring, $I$ is a faithful maximal Cohen-Macaulay $\OX0$-module.
Hence from \cite[Proposition 3.3.13]{BH97} we get that $I$ is a canonical ideal iff  $I$ is a
type one Cohen-Macaulay   $\OX0$-module.
Since   the Cohen-Macaulay type is a  positive upper semi-continuous function,
we get the claim.
\end{proof}

\begin{theorem}
\label{chargor}
Given reduced singularities $(X_i,0)$, $i=1, 2$,  the following conditions are equivalent:
\begin{enumerate}
\item[(i)]
there exists an  analytic isomorphism  $\phi: ({\mathbb C}^n, 0) \longrightarrow  ({\mathbb C}^n, 0)$ such that $\phi(X_1,0)=(X_2,0)$,

\item[(ii)]
there exists an  analytic isomorphism  $\phi: ({\mathbb C}^n, 0) \longrightarrow  ({\mathbb C}^n, 0)$
inducing a $\mathbb C$-scheme isomorphism
$\tilde \phi: \Can(X_1,0) \longrightarrow  \Can(X_2,0)$,

\item[(iii)]
there exists an  analytic isomorphism  $\phi: ({\mathbb C}^n, 0) \longrightarrow  ({\mathbb C}^n, 0)$ and  $[(X_1,0)_I]\in \Can(X_1,0)$ such that
$[\phi((X_1,0)_I)]\in \Can(X_2,0)$.
\end{enumerate}
\end{theorem}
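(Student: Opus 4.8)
The plan is to prove the cyclic chain of implications $(i) \Rightarrow (iii) \Rightarrow (ii) \Rightarrow (i)$, using throughout the fact that $\Can(X,0)$ is, by \propref{canlocus}, an intrinsic subscheme of $\hilb_{(\mathbb C^n,0)}^{\ell(X)}$ attached to $(X,0)$, and that any ambient analytic automorphism $\phi$ of $(\mathbb C^n,0)$ induces a scheme automorphism $\tilde\phi$ of the Hilbert scheme with $\tilde\phi([Z])=[\phi(Z)]$.

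First I would show $(i)\Rightarrow(iii)$. Given $\phi$ with $\phi(X_1,0)=(X_2,0)$, the numerical invariants $e_0,\mu,\delta,r$ of the two singularities agree, so $\ell(X_1)=\ell(X_2)$ and $2\mu(X_1)+1 = 2\mu(X_2)+1$; likewise $\sigma(X_1)=\sigma(X_2)$ since $\sigma$ is an analytic invariant (as noted after the definition of deep ideals). Pick any deep canonical ideal $I\subset\max_{X_1}^{2\mu(X_1)+1}$ of $\OX[0]{} $ with $\length=\ell(X_1)$ and $s=\sigma(X_1)$, which exists by \propref{generic}; then $[(X_1,0)_I]\in\Can(X_1,0)$. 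The isomorphism $\phi$ carries $\OX[0]{}/I$ to $\mathcal O_{(X_2,0)}/\phi_*(I)$, where $\phi_*(I)$ is again maximal Cohen-Macaulay of type one inside $\mathcal O_{(X_2,0)}$, hence a canonical ideal by \cite[Proposition 3.3.13]{BH97}; it lies in $\max_{X_2}^{2\mu(X_2)+1}$ because $\phi$ respects the maximal-ideal filtration, has the same colength $\ell(X_2)$, and the same socle degree $\sigma(X_2)$. Thus $[\phi((X_1,0)_I)]\in\Can(X_2,0)$, which is $(iii)$.

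Next, $(iii)\Rightarrow(ii)$: once $\phi$ maps one closed point of $\Can(X_1,0)$ into $\Can(X_2,0)$, I want to upgrade this to a scheme isomorphism $\tilde\phi\colon\Can(X_1,0)\to\Can(X_2,0)$. The point is that $\tilde\phi$ is already defined on all of $\hilb_{(\mathbb C^n,0)}^{\ell}$ (with $\ell=\ell(X_1)=\ell(X_2)$, the equality of lengths forced by $[\phi((X_1,0)_I)]$ lying in a Hilbert scheme of the right length), and $\Can(X_i,0)$ is cut out inside it by the intrinsic conditions ``$(X,0)_I$ is a quotient of $\mathcal O_{(X_i,0)}$'', ``$I\subset\max^{2\mu+1}$'', ``the quotient is Gorenstein of the prescribed type'', ``socle degree $=\sigma$''. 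Because $\phi$ is an analytic isomorphism of the pairs $(\mathbb C^n,0)\supset(X_1,0)$ and $(\mathbb C^n,0)\supset(X_2,0)$ — this is what must be extracted from $(iii)$, see the obstacle below — these defining conditions are matched by $\tilde\phi$, so $\tilde\phi$ restricts to an isomorphism of the two subschemes, giving $(ii)$. Finally $(ii)\Rightarrow(i)$: from an isomorphism $\tilde\phi\colon\Can(X_1,0)\to\Can(X_2,0)$ induced by an ambient $\phi$, I pick any closed point $[(X_1,0)_I]$; its image is $[\phi((X_1,0)_I)]=[(X_2,0)_J]$ for a deep canonical ideal $J$ of $\mathcal O_{(X_2,0)}$, so $\phi$ carries the subscheme $(X_1,0)_I$ of $(\mathbb C^n,0)$ onto the subscheme $(X_2,0)_J$. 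Since by \cite[Theorem 15.8]{Mat77} and the remark after it there is an integer $\alpha$ and a regular element $y$ with $z^{\alpha}I = y\,z\,\omega_{(X_1,0)}$ (and similarly for $J$), the canonical ideal determines $(X_i,0)$: from the inclusion $(X_i,0)_I\subset(X_i,0)$ one recovers $(X_i,0)$ as $\spec$ of the ring in which $I$ (equivalently $z^{\alpha}I$, equivalently $z\,\omega_{(X_i,0)}$) is a canonical ideal. Concretely, $\phi$ maps the canonical ideal $I$ of $\OX[0]{}$ to the canonical ideal $J$ of $\mathcal O_{(X_2,0)}$, and since $\OX[0]{}$ is, up to the ambient coordinates, the unique reduced one-dimensional quotient of $\series n$ of embedding dimension $n$ admitting $I$ as a canonical ideal, we conclude $\phi(X_1,0)=(X_2,0)$, which is $(i)$.

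The main obstacle I anticipate is the step where one must pass from ``$\phi$ sends one deep-canonical-ideal point of $\Can(X_1,0)$ into $\Can(X_2,0)$'' to ``$\phi$ carries the curve $(X_1,0)$ to $(X_2,0)$'' — i.e. the genuine content beyond bookkeeping on the Hilbert scheme. The resolution is exactly the reconstruction principle of the previous paragraph: a canonical ideal $I$ of a one-dimensional Cohen-Macaulay local ring, together with its embedding in that ring, determines the ring, because $\mathcal O/I$ being Gorenstein is equivalent (by \lemref{push}, \propref{push2}) to a structural property and, conversely, the canonical ideals of $\OX[0]{}$ form a single isomorphism class all related to $z\,\omega_{(X,0)}$; so $\phi$, which a priori only matches the Artin Gorenstein quotients, in fact matches the ambient curves once one remembers that each deep canonical ideal sits inside a \emph{specified} curve singularity and $\phi$ is an ambient automorphism. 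All the accompanying verifications — constancy of $\ell,\sigma$ under $\phi$, preservation of the filtration condition $I\subset\max^{2\mu+1}$, preservation of colength and socle degree — are routine and follow from \propref{generic} and \propref{canlocus}.
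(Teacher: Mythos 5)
There is a genuine gap in the step that carries all the content of the theorem, namely getting from $(iii)$ back to $(i)$. Your argument there rests on the asserted ``reconstruction principle'' that $\mathcal{O}_{(X_1,0)}$ is \emph{the unique} reduced one-dimensional quotient of $\series{n}$ admitting the embedded Artin Gorenstein scheme $(X_1,0)_I$ as a canonical-ideal quotient, so that $\phi$ matching the Artin quotients forces $\phi(X_1,0)=(X_2,0)$. This is never proved, and the facts you cite in its support (all canonical ideals of a fixed curve are isomorphic; $y_1I_1=y_2I_2$; the Gorenstein-quotient criteria of Lemma~2.1 and Proposition~2.2) all concern the relation between \emph{different canonical ideals of one and the same curve} --- they say nothing about whether two distinct curves could share the same embedded length-$\ell$ Gorenstein subscheme as a deep canonical quotient. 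The paper closes this gap quite differently: because a deep canonical ideal satisfies $I\subset\max_{X_1}^{2\mu(X_1)+1}$ (and likewise for the image point in $\Can(X_2,0)$), the equality of the Artin subschemes forces equality of the \emph{truncated} curves, $(\phi(X_1,0))_{2\mu+1}=(X_2,0)_{2\mu+1}$ with $\mu=\min(\mu(X_1),\mu(X_2))$, and then the finite determinacy theorem for curve singularities \cite[Theorem 6]{Eli86c} supplies a further ambient isomorphism $\varphi$ with $\varphi\phi(X_1,0)=(X_2,0)$. That determinacy theorem is the essential external input, and it is absent from your proposal; this is also why the depth condition $I\subset\max^{2\mu+1}$ appears in the definition of $\Can(X,0)$ at all.

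Two smaller but related problems. First, your conclusion is too strong: one cannot expect the \emph{same} $\phi$ appearing in $(iii)$ to satisfy $\phi(X_1,0)=(X_2,0)$ --- $\phi$ only matches two finite subschemes, and the correct conclusion (and all that $(i)$ requires) is that \emph{some} composite $\varphi\phi$ carries $(X_1,0)$ to $(X_2,0)$. Second, your step $(iii)\Rightarrow(ii)$ is circular: you explicitly need ``$\phi$ is an isomorphism of the pairs $(\mathbb{C}^n,0)\supset(X_1,0)$ and $(\mathbb{C}^n,0)\supset(X_2,0)$'', which is statement $(i)$. The clean logical route (the paper's) is to note that $(i)\Rightarrow(ii)\Rightarrow(iii)$ are immediate from the functoriality of $\tilde\phi$ on the Hilbert scheme and the analytic invariance of $\ell(X)$ and $\sigma(X)$, and to concentrate the whole proof on $(iii)\Rightarrow(i)$ via truncation plus \cite{Eli86c}.
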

\begin{proof}
Given an integer $s\ge 1$ we  denote by $(X,0)_s$ the Artin scheme defined by $\OX0/\max_X^s$.
The implications $(i)\Rightarrow (ii)\Rightarrow(iii)$ are trivial.
Assume that there exist $[(X_1,0)_I]\in \Can(X_1,0)$ such that
$[\phi((X_1,0)_I)]\in \Can(X_2,0)$.
If $\mu(X_1)\le \mu(X_2)$ then, since $I\subset \max_X^{2 \mu(X)+1}$ we get that
$(\phi(X_1,0))_{2\mu(X_1)+1}=(X_2,0)_{2\mu(X_1)+1}$.
From the main result of \cite[Theorem 6]{Eli86c} we get that
there exists an  analytic isomorphism  $\varphi: \Cn0 \longrightarrow  \Cn0$
such that
$\varphi \phi (X_1,0)= (X_2,0).$
If $\mu(X_2)< \mu(X_1)$ then we consider
$(\phi^{-1}(X_2,0))_{2\mu(X_2)+1}=(X_1,0)_{2\mu(X_2)+1}$.
Again, from the main result of \cite[Theorem 6]{Eli86c} we get $(i)$.
\end{proof}

An instance of Matlis duality is Macaulay's inverse system:
given a Gorenstein Artin algebra $B$ of socle degree $s$, the Macaulay inverse system of $B$
is  a polynomial of degree $s$ that encodes several algebraic properties of $B$,  see \cite{Iar94}.
In the next result we find canonical ideals $I\subset \OX0$, with suitable socle degree, as the first step to consider the inverse system of a curve singularity.

\begin{proposition}
For all $z \in U_{4 \mu(X) +1}(X)$,  $x\in U_1(X)$  and $a\ge 0$ it holds
$$
s\left(\frac{\OX0}{x^{a+1} \overline{z} \omega_{(X,0)}}\right)=
1+ s\left(\frac{\OX0}{x^{a} \overline{z}\omega_{(X,0)}}\right).
$$
In particular for suitable integer $a$ we have
$$
s\left(\frac{\OX0}{x^{a} \overline{z} \omega_{(X,0)}}\right)= \delta(X) (4 e_0(X)+3).
$$
\end{proposition}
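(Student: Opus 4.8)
The plan is to exploit the fact, established in \propref{push} and \propref{push2}, that multiplying a canonical ideal by a non-zerodivisor of order one shifts the socle degree of the Artin quotient by exactly one, together with the length computation of \propref{canideal}(i) and \propref{generic}(ii). First I would fix $z\in U_{4\mu(X)+1}(X)$ and $x\in U_1(X)$; by \propref{generic}(i.1) both $\overline{z}$ and $x$ are superficial elements and non-zerodivisors of $\OX0$, and $\overline{z}\omega_{(X,0)}$ is a canonical ideal contained in $\max_X^{2\mu(X)+1}$ by \propref{canideal}(i). Then $x^{a}\overline{z}\omega_{(X,0)}$ is again a canonical ideal for every $a\ge 0$ (it is $x^{a}$ times a canonical ideal, with $x^a$ a non-zerodivisor), and since $x\in\max_X\setminus\max_X^2$ we have the inclusion $x^{a+1}\overline{z}\omega_{(X,0)}\subset x\OX0$. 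The key point is that $\OX0/x^{a}\overline{z}\omega_{(X,0)}$ is a Gorenstein Artin ring by \propref{canideal}(i) (being the quotient by a canonical ideal of a one-dimensional Cohen–Macaulay ring), so \lemref{push}(ii) — or rather the socle-shift mechanism behind it — applies with $I=x^{a}\overline{z}\omega_{(X,0)}$ and $n=1$.

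The heart of the argument is the socle-degree identity $s(\OX0/xI)=s(\OX0/I)+1$ whenever $I$ is a canonical ideal and $x\in\max_X\setminus\max_X^2$ is a non-zerodivisor. I would derive this from the short exact sequence
$$
0\longrightarrow \OX0/I \stackrel{x}{\longrightarrow} \OX0/xI \longrightarrow \OX0/x\OX0\longrightarrow 0
$$
used in the proof of \lemref{push}: the submodule $x(\OX0/I)\cong \OX0/I$ of $\OX0/xI$ is annihilated by no smaller power of $\max_X$ than $\OX0/I$ itself, so $\max_X^{s+1}(\OX0/xI)\supseteq x\max_X^{s}(\OX0/I)\neq 0$ where $s=s(\OX0/I)$; conversely, since $\OX0/xI$ is Gorenstein its socle is one-dimensional and sits inside $x(\OX0/I)$ (because $I\subset x\OX0$ forces, as in the proof of \lemref{push}(ii), that the socle of $\OX0/xI$ is killed by the map to $\OX0/x\OX0$), hence $\max_X^{s+2}(\OX0/xI)=x\max_X^{s+1}(\OX0/I)=0$. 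Applying this with $I=x^{a}\overline{z}\omega_{(X,0)}$ gives the displayed recursion.

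For the ``in particular'' clause I would iterate: $s(\OX0/x^{a}\overline{z}\omega_{(X,0)})=a+s(\OX0/\overline{z}\omega_{(X,0)})$. By \propref{canideal}(ii) (equivalently \propref{generic}(ii)) the base socle degree $s(\OX0/\overline{z}\omega_{(X,0)})$ is a fixed integer, bounded above by $\delta(X)(4e_0(X)+3)$; choosing $a=\delta(X)(4e_0(X)+3)-s(\OX0/\overline{z}\omega_{(X,0)})\ge 0$ produces exactly the target value. I expect the main obstacle to be the careful verification that the socle of the Gorenstein ring $\OX0/x^{a+1}\overline{z}\omega_{(X,0)}$ really lies in the image of multiplication by $x$ — i.e. that no ``new'' socle element appears in top degree outside $x(\OX0/x^{a}\overline{z}\omega_{(X,0)})$ — which is precisely where the hypothesis $x\in\max_X\setminus\max_X^2$ and the inclusion $I\subset x\OX0$ are needed, exactly as in \lemref{push}(ii); everything else is bookkeeping with the length formula $\length(\OX0/x^{a}\overline{z}\omega_{(X,0)})=e_0(X)(4\mu(X)+1)-2\delta(X)+ae_0(X)$ from \cite[Theorem 12.5]{Mat77}.
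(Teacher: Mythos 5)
Your reduction of the statement to the single identity $s(\OX0/xI)=s(\OX0/I)+1$ for $I=x^a\overline z\,\omega_{(X,0)}$, followed by iteration and the bound from \propref{canideal}(ii), is exactly the intended structure, and your lower bound $s(\OX0/xI)\ge s(\OX0/I)+1$ (from $x\max_X^{s}\subseteq\max_X^{s+1}$ and $x\max_X^{s}\not\subseteq xI$, $x$ being a non-zerodivisor) is correct. The gap is in the upper bound. From ``$\soc(\OX0/xI)$ is one-dimensional and contained in $x\OX0/xI$'' you cannot conclude ``hence $\max_X^{s+2}(\OX0/xI)=x\max_X^{s+1}(\OX0/I)=0$'': knowing that the socle of $M=\OX0/xI$ lies in the submodule $x\OX0/xI\cong\OX0/I$, which is killed by $\max_X^{s+1}$, does not control how many steps it takes for the chain $\max_X^kM$ to \emph{enter} that submodule, which is what the socle degree measures. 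The hypotheses you actually invoke ($x\in\max_X\setminus\max_X^2$ a non-zerodivisor, $I$ a canonical ideal with $I\subseteq x\OX0$, both quotients Gorenstein) are genuinely insufficient. For instance, in $R=\res[\![t^4,t^5,t^6,t^7]\!]$ one has $\max^k=t^{4k}\,\res[\![t]\!]$ for $k\ge2$; take the canonical ideal $K=(t^4,t^5,t^6)=t^8\omega$, the non-zerodivisor $x=t^5\in\max\setminus\max^2$, and $I=t^{20}K\subseteq xR$. Then $R/I$ and $R/xI$ are Gorenstein, $\soc(R/xI)\subseteq xR/xI$ exactly as in \lemref{push}, yet $s(R/I)=6$ while $s(R/xI)=8$: the socle degree jumps by $2$.

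What saves the proposition — and what your argument never uses — is that $x\in U_1(X)$ is a \emph{degree-one superficial} element, so that $\max_X^{n+1}=x\max_X^{n}$ for all $n\ge e_0(X)-1$. Since $I\subseteq\max_X^{2\mu(X)+1}$ one has $s:=s(\OX0/I)\ge 2\mu(X)\ge e_0(X)-1$, whence $\max_X^{s+1}=x\max_X^{s}\not\subseteq xI$ (as $\max_X^{s}\not\subseteq I$ and $x$ is a non-zerodivisor) and $\max_X^{s+2}=x\max_X^{s+1}\subseteq xI$ (as $\max_X^{s+1}\subseteq I$). This yields $s(\OX0/xI)=s+1$ directly, without appealing to the Gorenstein property or to the socle localization of \lemref{push} at all; in the counterexample above $t^5$ is precisely not superficial ($\length(R/t^5R)=5>e_0=4$), which is why the identity fails there. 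The remainder of your argument (iterating to $s(\OX0/x^a\overline z\,\omega_{(X,0)})=a+s(\OX0/\overline z\,\omega_{(X,0)})$ and choosing $a=\delta(X)(4e_0(X)+3)-s(\OX0/\overline z\,\omega_{(X,0)})\ge0$ via the bound of \propref{canideal}(ii)) is fine once the first identity is repaired; you should also add a word for the case $a=0$ to justify $\overline z\,\omega_{(X,0)}\subseteq x\OX0$, which again follows from $\overline z\,\omega_{(X,0)}\subseteq\max_X^{2\mu(X)+1}\subseteq\max_X^{e_0(X)}=x\max_X^{e_0(X)-1}$.
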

\begin{proof}
The proof of the first identity is standard.
The second one follows from \propref{canideal} $(ii)$ and \propref{basic}.
\end{proof}

Notice that from the last result we can attach to a curve singularity $(X,0)$ an Artin Gorenstein
local ring $B_X=\OX0/x^{a} z \omega_{(X,0)}$ with socle degree $\delta(X) (4 e_0(X)+3)$.
Consequently   we can attach to a curve singularity the Macaulay's inverse system $\xi_X$ of $B_X$
that is a degree $\delta(X) (4 e_0(X)+3)$ polynomial.
Hence the algebraic and geometric structure of $X$ is encoded in the polynomial $\xi_X$.
The development of this fact will be considered elsewhere.

\begin{example}
Let us consider the monomial ring $R=\res[\![t^4,t^7,t^9]\!]$ of \exref{exp-example}.
In  this case the polynomial $\xi_X$ has degree $144$.
 The explicit computation of this polynomial seems to be very hard, but  we can consider  a more friendly canonical ideal.
Notice that $R=\res[\![x_1,x_2,x_3]\!]/I$ with $I=(x_1^4-x_2x_3, x_2^3-x_1^3x_3,x_3^2- x_1 x_2^2)$, and  $J=t^{15} \omega_X= (x_1,x_3)$ is a canonical ideal.
Since $  x_1^{24}(x_1,x_3)\subset \max^{2\mu(X)+1}$, the analytic type of $X$ is determined by  the analytic type of the Artin Gorenstein algebra $B=R/ x_1^{24}(x_1,x_2)$, \cite{Eli86c}.
The ring $B$ has  Hilbert function $\{1,3,4^{(23)},2,1\}$, so that $B$ is of multiplicity $99$ and socle degree $26$,  the inverse system is the degree $26$ polynomial:

\medskip
\noindent
$
31087081215590400x_1x_2x_3^{11}+42744736671436800x_2^{8}x_3^{6}+
284964911142912000x_1^{2}x_2^{3}x_3^{9}+$

\noindent
$
14248245557145600x_1x_2^{10}x_3^{4}+
341957893371494400x_1^{3}x_2^{5}x_3^{7}+2849649111429120x_1^{5}x_3^{10}+
$

\noindent
$
647647525324800x_1^{2}x_2^{12}x_3^{2}+85489473342873600x_1^{4}x_2^{7}x_3^{5}+
21372368335718400x_1^{6}x_2^{2}x_3^{8}+
$

\noindent
$
2372335257600x_1^{3}x_2^{14}+
4749415185715200x_1^{5}x_2^{9}x_3^{3}+14248245557145600x_1^{7}x_2^{4}x_3^{6}+
43176501688320x_1^{6}x_2^{11}x_3+1781030694643200x_1^{8}x_2^{6}x_3^{4}+
67848788367360x_1^{10}x_2x_3^{7}+42405492729600x_1^{9}x_2^{8}x_3^{2}+
43176501688320x_1^{11}x_2^{3}x_3^{5}+94234428288x_1^{10}x_2^{10}+
3598041807360x_1^{12}x_2^{5}x_3^{3}+19769460480x_1^{14}x_3^{6}+
39538920960x_1^{13}x_2^{7}x_3+
$

\noindent
$
19769460480x_1^{15}x_2^{2}x_3^{4}+
1235591280x_1^{16}x_2^{4}x_3^{2}+4845456x_1^{17}x_2^{6}+1700160x_1^{19}x_2x_3^{3}+
85008x_1^{20}x_2^{3}x_3+24x_1^{23}x_3^{2}+x_1^{24}x_2^{2}
$.
\end{example}

\providecommand{\bysame}{\leavevmode\hbox to3em{\hrulefill}\thinspace}
\providecommand{\MR}{\relax\ifhmode\unskip\space\fi MR }
\providecommand{\MRhref}[2]{%
  \href{http://www.ams.org/mathscinet-getitem?mr=#1}{#2}
}
\providecommand{\href}[2]{#2}

\noindent
Juan Elias\\
Departament d'\`Algebra i Geometria\\
Universitat de Barcelona\\
Gran Via 585, 08007 Barcelona, Spain\\
e-mail: {\tt elias@ub.edu}

\end{document}